\newtheorem{theorem}{Theorem}[section]
\newtheorem{proposition}[theorem]{Proposition}
\newtheorem{lemma}[theorem]{Lemma}
\newtheorem{corollary}[theorem]{Corollary}
\theoremstyle{definition}%文字を立てる
\newtheorem{example}[theorem]{Example}
\newtheorem{definition}[theorem]{Definition}
\newtheorem*{notation}{Notation}
\newcommand{\type}{\operatorname{type}}
\newcommand{\kernel}{\operatorname{Ker}}
\newcommand{\im}{\operatorname{Im}}
\newcommand{\cl}{\operatorname{cl}}
\newcommand{\flow}{\mathrm{Flow}}
\newcommand{\flowtrivial}{\mathrm{Flow}_\mathrm{trivial}}
\newcommand{\col}{\mathrm{Col}}
\newcommand{\rank}{\operatorname{rank}}
\newcommand{\z}{\mathbb{Z}}
\newcommand{\cut}{\mathrm{cut}}
\begin{document}

\title[The tunnel number and the cutting number with constituent handlebody-knots \, \,]
{The tunnel number and the cutting number with constituent handlebody-knots}

\author{Tomo Murao}% Author Name (\sc should NOT be used here)
\address[T. Murao]
{Institute of Mathematics, University of Tsukuba, 1-1-1 Tennoudai, Tsukuba, Ibaraki 305-8571, Japan.}
\email{t-murao@math.tsukuba.ac.jp}

\subjclass[2010]{Primary 57M25; Secondary 57M15, 57M27}% Subject code(s)

\keywords{handlebody-knot, tunnel number, cutting number, quandle}% Key word(s)

\date{}

\maketitle

%%%%%%%%%%%%%%%%%%%%%%%%%%%%%%%%%%%%%%%%%%%%%%%%%%%%%%%%%%%%%%%%%%%%%%%%%%%%
%%%%%%%%%%%%%%%%%%%%%%%%%%%%%%%%%%%%%%%%%%%%%%%%%%%%%%%%%%%%%%%%%%%%%%%%%%%%
\begin{abstract}
We give lower bounds for the tunnel number of knots and handlebody-knots.
We also give a lower bound for the cutting number, 
which is a ``dual" notion to the tunnel number in handlebody-knot theory.
We provide necessary conditions to be constituent handlebody-knots 
by using $G$-family of quandles colorings.
The above two evaluations are obtained as the corollaries.
Furthermore, 
we construct handlebody-knots 
with arbitrary tunnel number and cutting number.
\end{abstract}

%%%%%%%%%%%%%%%%%%%%%%%%%%%%%%%%%%%%%%%%%%%%%%%%%%%%%%%%%%%%%%%%%%%%%%%%%%%%%%%%%%%%%%%%%%%%
\section{Introduction}

The tunnel number of a knot $K$ in the 3-sphere $S^3$ is defined to be 
the minimal number of mutually disjoint arcs $\gamma_1,\ldots, \gamma_t$ properly embedded in $E(K)$ 
such that $E(K \cup \gamma_1\cup \cdots \cup \gamma_t )$ becomes a handlebody, 
where $E(\cdot)$ denotes the exterior.
We call the collection of the arcs $\{\gamma_1,\ldots, \gamma_t \}$ an unknotting tunnel system for $K$.
The study of the tunnel number of knots is closely related to 
that of hyperbolic structures, Heegaard splittings and its Goeritz groups and so on of the exterior.
Indeed, 
for a knot $K$, 
each unknotting tunnel system $\{\gamma_1,\ldots, \gamma_t \}$ of $K$ provides 
a genus $t$ Heegaard splitting of $E(K)$, 
and any genus $t$ Heegaard splitting of $E(K)$ 
is obtained in this manner.
In addition, 
many results concerning the additivity of tunnel number of knots under connected sum 
are often obtained through discussions on Heegaard splittings (for example, see \cite[etc.]{Kim13,Morimoto93,Morimoto00,Morimoto15,SS99,YL11}).
Moriah and Rubinstein \cite{MR97} showed that 
an evaluation formula of tunnel numbers is best possible 
by using arguments from hyperbolic geometry.
Cho and McCullough \cite{CM09-1,CM09-2,CM10,CM11} gave an effective method 
for the study of unknotting tunnels of knots with tunnel number $1$ 
through discussions on Goeritz groups.

The definition of the tunnel number of knots 
is extended to that of handlebody-knots in the same way, 
where a handleboy-knot is a handlebody embedded in $S^3$, 
which is a generalization of a knot concerning a genus.
The study of handlebody-knot theory is suitable for that of unknotting tunnel systems 
since the operation of adding a ``tunnel" has a closure property in handlebody-knot theory, 
that is, 
a handlebody-knot and its unknotting tunnel system $\{\gamma_1,\ldots, \gamma_t \}$ 
can be realized as a sequence of $t+1$ handlebody-knots.
Hence we can evaluate the tunnel number step by step 
through arguments from handlebody-knot theory.
Actually, 
Ishii \cite{Ishii08} gave a lower bound for the tunnel number of handlebody-knots 
by using dihedral quandle colorings for handlebody-knots.

We may regard the tunnel number of a handlebody-knot $H$ 
as the minimal number of 2-handles that must be ``removed" from $E(H)$ such that it becomes a handlebody.
In this paper, we introduce a geometric invariant for handlebody-knots, 
called the cutting number, 
which is defined to be the minimal number of 2-handles that must be ``attached" to $E(H)$ such that it becomes a handlebody.
In this sense, 
the tunnel number and the cutting number are ``dual" geometric invariants for handlebody-knots 
which have finite values.
In this paper, 
for a handlebody-knot $H$, 
we define a constituent handlebody-knot of $H$ 
by a handlebody-knot obtained from $H$ 
by removing an open regular neighborhood of some meridian disks of $H$.
By introducing the notion of constituent handlebody-knots, 
we can deal with the tunnel number and the cutting number of handlebody-knots uniformly.

Ishii \cite{Ishii08} introduced an enhanced constituent link of a spatial trivalent graph, 
and Ishii and Iwakiri \cite{II12} introduced an $A$-flow of a spatial graph, 
where $A$ is an abelian group, 
to define colorings and invariants of handlebody-knots.
Ishii, Iwakiri, Jang and Oshiro \cite{IIJO13}
introduced a $G$-family of quandles 
to extend the above structures.
Ishii and Nelson \cite{IN17} introduced a $G$-family of biquandles, 
which is a biquandle version of a $G$-family of quandles.
However, recently 
the author \cite{Murao-pre} proved that 
there is a one-to-one correspondence between 
the set of a $G$-family of biquandles colorings and that of a $G$-family of quandles colorings 
for any handlebody-knot.
Hence, 
in this paper, 
we give necessary conditions to be constituent handlebody-knots 
by using $G$-family of quandles colorings.
We also give lower bounds for the tunnel number, 
which is a generalization of Ishii's result in \cite{Ishii08}, 
and the cutting number of handlebody-knots.

The outline of the paper is as follows. 
In Section 2, 
we introduce constituent handlebody-knots, 
the tunnel number and the cutting number of handebody-knots.
In Section 3, 
we review a coloring for handlebody-knots 
by using a $G$-family of quandles.
In Section 4, 
we consider module structures of coloring sets by $G$-families of Alexander quandles 
and give some examples of such coloring sets.
In Section 5, 
we provide necessary conditions to be constituent handlebody-knots 
by using $G$-family of quandles colorings.
Furthermore, as the corollaries, 
we give lower bounds for the tunnel number and the cutting number of handlebody-knots.
In Section 6, 
we construct a family of handlebody-knots 
which do not contain a certain classical knot as a constituent handlebody-knot.
Moreover, 
we construct handlebody-knots 
with arbitrary tunnel number and cutting number.

%%%%%%%%%%%%%%%%%%%%%%%%%%%%%%%%%%%%%%%%%%%%%%%%%%%%%%%%%%%%%%%%%%%%%%%%%%%%
\section{Preliminaries}
A \emph{handlebody-link} is the disjoint union of handlebodies embedded in the 3-sphere $S^3$ \cite{Ishii08}.
A \emph{handlebody-knot} is a one component handlebody-link, 
which is a generalization of a knot with respect to a genus.
In this paper, 
we assume that every component of a handlebody-link 
is of genus at least $1$.
An \emph{$S^1$-orientation} of a handlebody-link is an orientation 
of all genus 1 components of the handlebody-link, 
where an orientation of a solid torus is an orientation of its core $S^1$.
Two $S^1$-oriented handlebody-links $H_1$ and $H_2$ are \emph{equivalent}, 
denoted $H_1 \cong H_2$, 
if there exists an orientation-preserving self-homeomorphism of $S^3$ 
sending one to the other 
preserving the $S^1$-orientation.

A \emph{spatial trivalent graph} is a finite trivalent graph embedded in $S^3$.
In this paper, 
a trivalent graph may have a circle component, 
which has no vertices.
A \emph{Y-orientation} of a spatial trivalent graph is 
an orientation of the graph without sources and sinks with respect to the orientation (Figure \ref{Y-orientation}).
A vertex of a Y-oriented spatial trivalent graph can be allocated a sign; 
the vertex is said to have sign $+1$ or $-1$.
The standard convention is shown in Figure \ref{Y-orientation}.
For a Y-oriented spatial trivalent graph $K$ 
and an $S^1$-oriented handlebody-link $H$, 
we say that 
$K$ \emph{represents} $H$ 
if $H$ is a regular neighborhood of $K$ 
and the $S^1$-orientation of $H$ agrees with the Y-orientation.
Any $S^1$-oriented handlebody-link can be represented by 
some Y-oriented spatial trivalent graph.
We define a \emph{diagram} of an $S^1$-oriented handlebody-link 
by a diagram of a Y-oriented spatial trivalent graph 
representing the handlebody-link.
An $S^1$-oriented handlebody-link is \emph{trivial}
if it has a diagram with no crossings.
In particular, 
$H$ is an $S^1$-oriented trivial handlebody-knot 
if and only if 
the exterior is a handlebody.
In this paper, 
we denote by $O_g$ the $S^1$-oriented genus $g$ trivial handlebody-knot.

\begin{figure}[htb]
\begin{center}
\includegraphics[width=45mm]{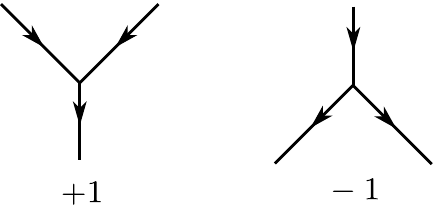}
\end{center}
\caption{Y-orientations and signs of a vertex.}\label{Y-orientation}
\end{figure}

Then the following theorem holds.

\begin{theorem}[\cite{Ishii15-2}]\label{Reidemeister moves}
For any diagrams $D_1$ and $D_2$ of $S^1$-oriented handlebody-links $H_1$ and $H_2$ respectively, 
$H_1$ and $H_2$ are equivalent 
if and only if 
$D_1$ and $D_2$ are related 
by a finite sequence of R1--R6 moves 
depicted in Figure \ref{Reidemeister move} 
preserving Y-orientations.
\end{theorem}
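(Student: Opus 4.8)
The plan is to establish the two implications separately; the reverse (``if'') direction is a direct geometric check, while the forward (``only if'') direction carries essentially all of the work.

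For the ``if'' direction I would show that each move R1--R6 replaces the diagram by the diagram of an equivalent $S^1$-oriented handlebody-link. For R1--R5 the underlying Y-oriented spatial trivalent graph is altered only inside a $3$-ball, by an ambient isotopy of $S^3$ supported there; hence its regular neighborhood, and so the handlebody-link, is carried along by an ambient isotopy, and one reads off from Figure~\ref{Reidemeister move} that the Y-orientation, and therefore the induced $S^1$-orientation on each genus $1$ component, is unchanged. For R6 (the IH-move) the two local spatial graphs are \emph{not} isotopic, but a regular neighborhood of the ``I''-tangle and of the ``H''-tangle is the same arc pattern inside the common $3$-ball, so again the handlebody-link is literally unchanged up to isotopy; one checks from the figure that the two Y-orientation conventions induce the same orientation on the core circles. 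Since equivalence of $S^1$-oriented handlebody-links is generated by orientation-preserving self-homeomorphisms of $S^3$ compatible with the $S^1$-orientation, this direction follows.

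For the ``only if'' direction, suppose $H_1\cong H_2$. Using the definition of a diagram recalled in Section~2, realize $D_i$ as the diagram of a Y-oriented spatial trivalent graph $G_i$ with $H_i$ a regular neighborhood of $G_i$. The argument then splits into three reductions. (i) If two spatial trivalent graphs with arbitrary trivalent vertices present ambient-isotopic graphs, then their diagrams are related by a finite sequence of the generalized Reidemeister moves for spatial graphs, namely the classical R1--R3 together with the vertex--strand moves R4--R5; apply this componentwise. (ii) A handlebody-link determines a spine only up to the IH-move, and conversely if $N(G_1)$ and $N(G_2)$ are ambient isotopic then $G_1$ and $G_2$ become ambient isotopic after finitely many IH-moves; each IH-move is realized on diagrams by one application of R6, possibly surrounded by R1--R5 to bring the relevant edge into the standard local picture. (iii) Finally, $G_1$ and $G_2$ may a priori carry Y-orientations that both induce the prescribed $S^1$-orientation on $H_1\cong H_2$ but differ at some vertices or along some cycles (and, on components of genus $\ge 2$, differ arbitrarily), so I must show that every such change of Y-orientation is itself a composition of R1--R6 moves. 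Assembling (i)--(iii) produces a finite sequence of R1--R6 moves from $D_1$ to $D_2$ preserving Y-orientations throughout.

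The main obstacle I expect is step (iii) together with the orientation-compatible refinement of step (ii): one must verify that the move set R1--R6 is complete, not merely that it generates a proper subgroupoid, i.e. that no additional ``Y-orientation move'' is needed. Concretely, on a connected trivalent graph the Y-orientations compatible with a fixed orientation of each genus $1$ component form a set acted on by edge flips and vertex-sign changes, and the task is to exhibit each generator of this action as a local sequence of R1--R6 --- for instance, creating a kink with R1 and pushing it across a vertex with R4/R5, or absorbing an orientation reversal through an IH-pair with R6 --- and to check that in step (ii) the IH-moves can always be arranged so that the Y-orientations match exactly at the instant R6 is applied. This bookkeeping is the delicate part; the remaining ingredients are a routine combination of the Reidemeister theorem for spatial graphs and the description of neighborhood equivalence of spines by IH-moves.
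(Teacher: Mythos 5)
This theorem is not proved in the paper at all: it is imported verbatim from \cite{Ishii15-2}, so there is no in-paper argument to compare against. Judged on its own terms, your outline reproduces the standard architecture of that reference (and of the unoriented precursor in \cite{Ishii08}): the ``if'' direction by checking each move locally, and the ``only if'' direction by combining the Reidemeister theorem for spatial graphs (R1--R5), the fact that two trivalent spines of ambient isotopic handlebody-links are related by IH-moves (R6), and a final step handling Y-orientations. Your observation that a regular neighborhood of the I-tangle and of the H-tangle in a common $3$-ball are isotopic rel boundary is the correct justification for R6 in the ``if'' direction.

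The genuine gap is your step (iii), which you flag but do not close, and it is precisely the step that constitutes the new mathematical content of \cite{Ishii15-2} over the unoriented statement. The difficulty is structural, not mere bookkeeping: every move in Figure \ref{Reidemeister move} preserves the orientations of the strands meeting the boundary of the disk where it is applied, so no single move reverses an edge in place; orientation changes can only be produced indirectly, because R6 deletes one edge and creates another, so the identification of ``the same edge'' before and after is lost. One must therefore (a) determine exactly which pairs of Y-orientations of a fixed diagram are related by sequences of R1--R6 --- the answer being those inducing the same $S^1$-orientation on each genus $1$ component, with no constraint on components of genus $\geq 2$ --- and (b) interleave this with your step (ii), since the IH-moves produced by the spine argument come with no preferred Y-orientations and must be promoted to Y-oriented R6 moves whose boundary orientations match the ambient diagram at the moment they are applied. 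Without an explicit argument for (a), e.g.\ an induction showing that reversing a cycle of edges, or toggling a vertex between the $2$-in-$1$-out and $1$-in-$2$-out states, is realized by a concrete local sequence of moves, the ``only if'' direction is not established; as written, your proposal is a correct plan whose hardest lemma is stated rather than proved.
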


\begin{figure}[htb]
\begin{center}
\includegraphics[width=125mm]{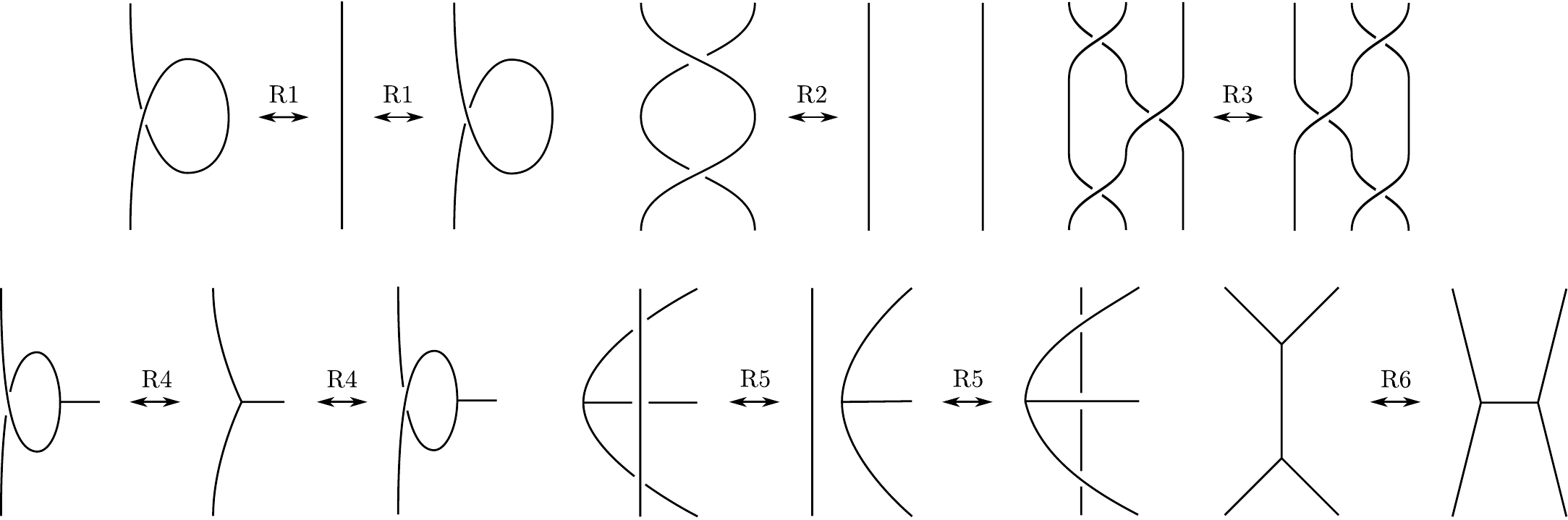}
\caption{The Reidemeister moves for handlebody-links.}\label{Reidemeister move}
\end{center}
\end{figure}

\begin{notation}
Throughout the paper, 
for any diagram $D$ of an $S^1$-oriented handlebody-link, 
we denote by $\mathcal{A}(D)$, $C(D)$ and $V(D)$ 
the set of all arcs, crossings and vertices of $D$ respectively.
An orientation of an arc of $D$ 
is also represented by the normal orientation 
obtained by rotating the usual orientation counterclockwise 
by $\pi/2$ on the diagram.
For any $m \in \mathbb{Z}_{\geq 0}$,
we put $\mathbb{Z}_m:=\mathbb{Z}/m\mathbb{Z}$.
For any set $X$, 
we denote by $\#X$ or $|X|$ the cardinality of $X$.
\end{notation}

Let $H$ and $H'$ be genus $g$ and $g'$ $(g'<g)$ handlebody-knots respectively.
We call $H'$ a \emph{constituent handlebody-knot} of $H$, denoted $H'<H$, 
if there exist mutually disjoint meridian disks $\Delta_1, \ldots, \Delta_{g-g'}$ of $H$ 
such that $\cl(H-\bigcup_{i=1}^{g-g'}N(\Delta_i)) \cong H'$, 
where $N(\cdot)$ and $\cl(\cdot)$ denote 
a regular neighborhood and the closure respectively.
For a genus $g$ handlebody-knot $H$, 
a set of mutually disjoint meridian disks $\{\Delta_1, \ldots ,\Delta_l\}$ of $H$ 
is called a \emph{cutting  system} of $H$ 
if $\cl(H-\bigcup_{i=1}^lN(\Delta_i))$ 
is a handlebody standardly embedded in $S^3$, 
which means that the exterior is a handlebody.
We note that 
the genus of the handlebody may be $0$.
Then we define the \emph{cutting number} $\cut(H)$ of $H$ 
by the minimal number of the cardinalities of cutting systems of $H$.
We note that $\cut(O_g)=0$ for any $g$. 
That is, 
\begin{align*}
\cut(H):=
\begin{cases}
\min \{ \# \Theta \mid \Theta : \text{a cutting  system of $H$} \} & (H \ncong O_g),\\
0 & (H \cong O_g).
\end{cases}
\end{align*}

By the definition, the following hold.
\begin{itemize}
\item
$
\cut(H)=
\begin{cases}
\min \{ i \mid O_{g-i}<H \} & (\text{$O_{g} < H$ for some $g$}),\\
g & (\text{$O_{g} \not< H$ for any $g$}).
\end{cases}
$
\item
$0 \leq \cut(H) \leq g$．
\item
$t(H)=\min\{ i \mid H<O_{g+i} \}$,
\end{itemize}
where $t(H)$ is the tunnel number of $H$.
The tunnel number of a handlebody-knot $H$, 
which is a well-known geometric invariant for classical knots, 
is defined to be the minimal number of mutually disjoint arcs $\gamma_1,\ldots, \gamma_t$ properly embedded in $E(H)$ 
such that $E(H \cup \gamma_1\cup \cdots \cup \gamma_t )$ becomes a handlebody, 
where $E(\cdot)$ denotes the exterior.
In other words, 
the tunnel number is the minimal number of 2-handles 
that must be removed from the exterior such that it becomes a handlebody.
On the other hand, 
the cutting number of a handlebody-knot is the minimal number of 2-handles 
that must be attached to the exterior such that it becomes a handlebody.
In this sense, 
we can consider the cutting number of a handlebody-knot 
as a dual notion to the tunnel number.

%%%%%%%%%%%%%%%%%%%%%%%%%%%%%%%%%%%%%%%%%%%%%%%%%%%%%%%%%%%%%%%%%%%%%%%%%%%%
\section{Colorings by a $G$-family of quandles}

In this section, 
we introduce a $G$-flow and a coloring for $S^1$-oriented handlebody-links 
by using a $G$-family of quandles.

A quandle \cite{Joyce82, Matveev82} is a non-empty set $X$ with a binary operation $* : X \times X \to X$ 
satisfying the following axioms.
\begin{itemize}
\item
For any $x \in X$, 
$x*x=x$.
\item
For any $y \in X$, 
the map $S_y : X \to X$ defined by $S_y(x)=x*y$ is a bijection.
\item
For any $x,y,z \in X$, 
$(x*y)*z=(x*z)*(y*z)$.
\end{itemize}
We define the \emph{type} of a quandle $X$, denoted $\type X$, 
by the minimal number of $n \in \z_{>0}$ 
satisfying $a*^nb=a$ for any $a,b \in X$, 
where for any $i \in \mathbb{Z}$ and $x,y \in X$, 
we define $x*^i y =S_y^i(x)$.
We set $\type X := \infty$ 
if we do not have such a positive integer $n$.
Any finite quandle is of finite type.
Let $X$ be an $R[t^{\pm1}]$-module, 
where $R$ is a commutative ring.
For any $a,b \in X$, we define $a * b=ta+(1-t)b$.
Then $(X,*)$ is a quandle, called an \emph{Alexander quandle}.

Next, we recall the definition of a $G$-family of quandles.

\begin{definition}[\cite{IIJO13}]
Let $G$ be a group with the identity element $e$.
A $G$-family of quandles is a non-empty set $X$ 
with a family of binary operations $*^g:X \times X \to X ~(g \in G)$ 
satisfying the following axioms.
\begin{itemize}
\item
For any $x \in X$ and $g \in G$, 
$x*^gx=x.$
\item
For any $x,y \in X$ and $g,h \in G$, 
$x*^{gh}y=(x*^gy)*^hy$ and $x*^ey=x$.
\item
For any $x,y,z \in X$ and $g,h \in G$, 
$(x*^gy)*^hz=(x*^hz)*^{h^{-1}gh}(y*^hz)$.
\end{itemize}
\end{definition}

Let $R$ be a ring and $G$ be a group with the identity element $e$.
Let $X$ be a right $R[G]$-module, where $R[G]$ is the group ring of $G$ over $R$.
Then $(X,\{ *^g \}_{g \in G})$ is a $G$-family of quandles, called a \emph{$G$-family of Alexander quandles}, 
with $x*^gy=xg+y(e-g)$ \cite{IIJO13}.
Let $(X,*)$ be a quandle 
and put $k:=\type X$.
Then $(X,\{*^i\}_{i \in \mathbb{Z}_{k}})$ is a $\mathbb{Z}_{k}$-family of quandles.
In particular, 
when $X$ is an Alexander quandle, 
$(X,\{*^i\}_{i \in \mathbb{Z}_{k}})$ is called a \emph{$\mathbb{Z}_{k}$-family of Alexander quandles} 
(see Example \ref{ex Alex1}).

Let $D$ be a diagram of an $S^1$-oriented handlebody-link $H$.
It is known that the fundamental group $\pi_1(S^3-H)$ is represented by 
the arcs, crossings and vertices of $D$ as follows.
For a crossing $c$ and a vertex $\tau$ of $D$, 
we denote by $r_c$ the relation $v_c^{-1}u_cv_c=w_c$ and by $r_\tau$ the relation $\alpha_\tau \beta_\tau=\gamma_\tau$, 
where we denote by $u_c, v_c, w_c, \alpha_\tau, \beta_\tau$ and $\gamma_\tau$ 
the arcs incident to $c$ or $\tau$ as shown in Figure \ref{wirtinger}.
The fundamental group $\pi_1(S^3-H)$ is generated by 
the arcs $x$ for each $x \in \mathcal{A}(D)$ and has the relations $r_c$ and $r_\tau$ for each $c \in C(D)$ and $\tau \in V(D)$, 
that is, a presentation of $\pi_1(S^3-H)$ is given by 
\[
\langle x~ (x \in \mathcal{A}(D)) \mid r_c, r_\tau~ (c \in C(D), \tau \in V(D)) \rangle.
\]
We call it the \emph{Wirtinger presentation} of $\pi_1(S^3-H)$ with respect to $D$.

\begin{figure}[htb]
\begin{center}
\includegraphics[width=75mm]{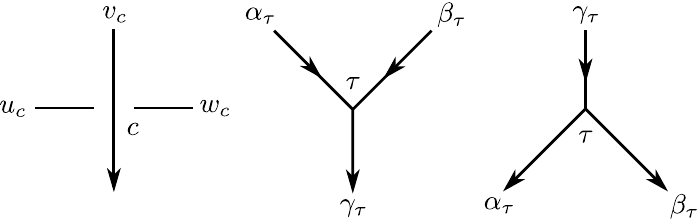}
\end{center}
\caption{Arcs incident to a crossing or a vertex.}\label{wirtinger}
\end{figure}

Let $G$ be a group 
and let $D$ be a diagram of an $S^1$-oriented handlebody-link $H$.
A \emph{$G$-flow} of $D$ is a map $\phi : \mathcal{A}(D) \to G$ satisfying the conditions 
depicted in Figure \ref{flow} 
at each crossing and vertex.
In this paper, 
to avoid confusion, 
we often represent an element of $G$ with an underline.
We denote by $(D,\phi)$, 
called a \emph{$G$-flowed diagram} of $H$, 
a diagram $D$ given a $G$-flow $\phi$ 
and by $\flow(D;G)$ the set of all $G$-flows of $D$.
We can identify a $G$-flow $\phi$ with a group representation of the fundamental group $\pi_1(S^3-H)$ to $G$, 
which is a group homomorphism from $\pi_1(S^3-H)$ to $G$.
Let $D'$ be a diagram of $H$ obtained by 
applying one of Reidemeister moves to $D$ once.
For any $G$-flow $\phi$ of $D$, 
there is a unique $G$-flow $\phi'$ of $D'$ 
which coincides with $\phi$ 
except near the point where the move applied.
This gives a one-to-one correspondence between $\flow(D;G)$ and $\flow(D';G)$.
Since the two $G$-flows $\phi$ and $\phi'$ represent the same group representation $\rho$, 
called a $G$-flow of $H$, 
we often use the symbol $\rho$ instead of $\phi$ and $\phi'$ 
and write $\flow(H;G)$ instead of $\flow(D;G)$ and $\flow(D';G)$.

\begin{figure}[htb]
\begin{center}
\includegraphics[width=75mm]{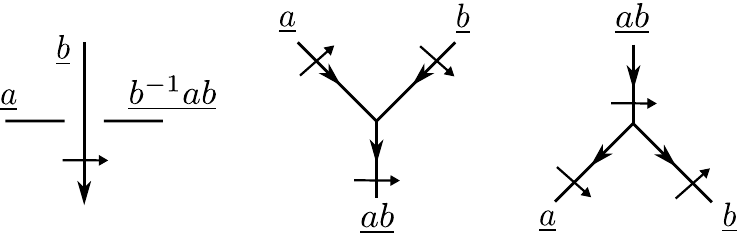}
\end{center}
\caption{A $G$-flow of $D$.}\label{flow}
\end{figure}

Let $X$ be a $G$-family of quandles 
and let $(D,\rho)$ be a $G$-flowed diagram of an $S^1$-oriented handlebody-link.
An \emph{$X$-coloring} of $(D,\rho)$ is a map 
$C : \mathcal{A}(D,\rho) \to X$ satisfying 
the conditions 
depicted in Figure \ref{G-family_of_qnd_col.} 
at each crossing and vertex.
An $X$-coloring $C$ is \emph{trivial} 
if $C$ is a constant map.
We denote by $\col_X(D,\rho)$ 
the set of all $X$-colorings of $(D,\rho)$.
It is easy to see that $\#\col_X(D,\rho) \geq \#X$.

\begin{figure}[htb]
\begin{center}
\includegraphics[width=85mm]{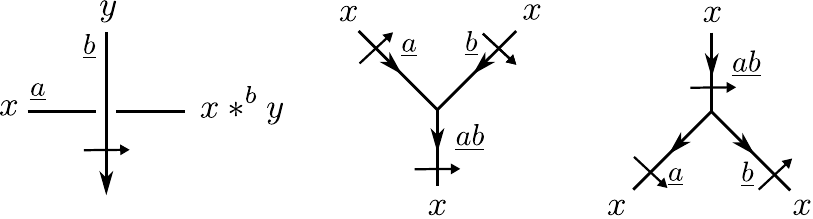}
\end{center}
\caption{A coloring of $(D,\rho)$ by a $G$-family of quandles.}\label{G-family_of_qnd_col.}
\end{figure}

\begin{proposition}[\cite{IIJO13}]
Let $X$ be a $G$-family of quandles, 
$D$ and $D'$ be diagrams of an $S^1$-oriented handlebody-link $H$ 
and let $\rho$ be a $G$-flow of $H$.
Then there is a one-to-one correspondence between 
$\col_X(D,\rho)$ and $\col_X(D',\rho)$.
\end{proposition}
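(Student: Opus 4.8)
The plan is to reduce everything to a single Reidemeister move and then perform a local computation. By Theorem~\ref{Reidemeister moves}, the two diagrams $D$ and $D'$ of $H$ are connected by a finite sequence of R1--R6 moves preserving the Y-orientation; composing the resulting bijections, it suffices to treat the case in which $D'$ is obtained from $D$ by a single move. As recorded above, such a move already induces a canonical bijection $\flow(D;G) \to \flow(D';G)$ carrying $\rho$ to $\rho$, so we may and do regard $\rho$ as a fixed $G$-flow on both diagrams; in particular the group elements decorating the arcs inside the small disk $B$ where the move takes place are pinned down.

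Now $D$ and $D'$ agree outside $B$, and the arcs meeting $\partial B$ are in natural correspondence. Given $C \in \col_X(D,\rho)$, keep the colors of all arcs disjoint from $\operatorname{int} B$. The colors of the finitely many arcs of $D$ inside $B$ are then recovered uniquely from the colors on $\partial B$ by propagating two rules: at a crossing, an under-arc colored $a$ before the over-arc becomes $a*^g b$ after it, where $g$ and $b$ are the flow and color of the over-arc, and this map is a bijection in $a$ by the second axiom of a $G$-family of quandles; at a vertex, the three incident arcs carry the same color. One then checks that the induced data on $\partial B$ is admissible for $D'$ as well, reads the colors back into $B$ on the $D'$ side to obtain $C' \in \col_X(D',\rho)$, and observes that applying the reverse move returns $C$. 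Hence $C \mapsto C'$ is the desired bijection, provided the induced boundary data really do match on the two sides.

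That matching is the substance of the proof and is verified one move at a time. For R1 it is the idempotency $x*^g x = x$, with $g$ the flow of the kinked loop; for R2 it is $x*^{gh}y = (x*^g y)*^h y$ specialized to $h = g^{-1}$, making the two crossing maps mutually inverse; for R3 it is the third axiom $(x*^g y)*^h z = (x*^h z)*^{h^{-1}gh}(y*^h z)$ together with the flow relations at the three crossings. The remaining moves R4--R6 each involve a trivalent vertex, and there one combines the vertex coloring condition (equal colors), the vertex flow relation $\alpha_\tau \beta_\tau = \gamma_\tau$, and the identities $x*^e y = x$ and $x*^{gh}y = (x*^g y)*^h y$ to see that pushing a strand past a vertex multiplies the relevant exponents consistently. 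I expect the move in which a strand is slid across a vertex to be the main obstacle: on each side the boundary-to-boundary transition is a composition of several crossing maps whose exponents get conjugated by the ambient flows, and reconciling the two compositions forces a simultaneous use of the mixed axiom $(x*^g y)*^h z = (x*^h z)*^{h^{-1}gh}(y*^h z)$ and the vertex flow relation --- routine, but the only genuinely delicate bookkeeping.
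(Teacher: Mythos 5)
Your proposal is correct and follows the standard argument: reduce to a single R1--R6 move via Theorem~\ref{Reidemeister moves}, use the canonical bijection of flows, and verify the local matching of colorings move by move using the three axioms of a $G$-family of quandles (idempotency for R1, $x*^{gh}y=(x*^gy)*^hy$ with $h=g^{-1}$ for R2, the mixed axiom for R3, and the vertex condition together with $x*^{gh}y=(x*^gy)*^hy$ and conjugation of flows for the vertex moves). The paper itself states this proposition as a citation to \cite{IIJO13} and gives no proof, but your outline is precisely the verification carried out there, so there is nothing to correct beyond the routine bookkeeping you have already flagged.
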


By this proposition, 
for any diagram $D$ of an $S^1$-oriented handlebody-link $H$, 
the multiset $\{ \# \col_X(D,\rho) \mid \rho \in \flow(H;G) \}$ is an invariant of $H$.

For example, 
let $(D,\rho)$ be the $\mathbb{Z}_2$-flowed diagram of the handlebody-knot depicted in Figure \ref{example coloring} 
and 
let $R_3$ be the dihedral quandle, 
that is, $R_3=\z_3$ and $x*y=2y-x$ for any $x,y \in R_3$.
We note that $\type R_3=2$. 
Then $(R_3, \{ *^i \}_{i \in \z_2})$ is a $\z_2$-family of quandles.
Therefore the assignment of elements of $R_3$ to each arc of $(D,\rho)$ 
as shown in Figure \ref{example coloring} is an $(R_3, \{ *^i \}_{i \in \z_2})$-coloring of $(D,\rho)$.

\begin{figure}[htb]
\begin{center}
\includegraphics[width=70mm]{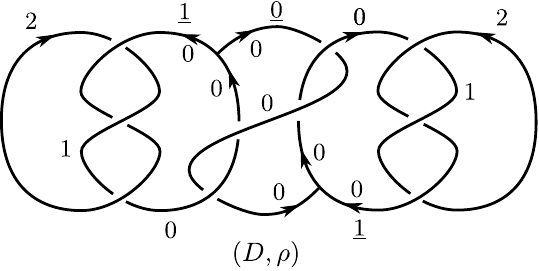}
\end{center}
\caption{A coloring of $(D,\rho)$ by the $\z_2$-family of quandles $(R_3, \{ *^i \}_{i \in \z_2})$.}\label{example coloring}
\end{figure}

Let $D$ be a diagram of an $S^1$-oriented handlebody-link $H$.
A $G$-flow $\rho$ of $H$ is a \emph{trivial coloring $G$-flow} 
if for any $G$-family of quandles $X$ and $C \in \col_X(D,\rho)$, $C$ is a trivial $X$-coloring.
We denote by $\flowtrivial(H;G)$ the set of all trivial coloring $G$-flows of $H$. 

For any group $G$ and $S^1$-oriented handlebody-knot $H$, 
the constant map $\rho_e : \pi_1(S^3-H) \to G$ sending into the identity element $e$ 
is a trivial coloring $G$-flow of $H$ 
since 
for any $G$-family of quandles $X$ and $x,y \in X$, 
it follows that  $x*^ey=x$.

At last in this section, 
we prove the following lemma 
we use in Section 5.

\begin{lemma}\label{trivial hdbdy-knot flow}
For any group $G$, 
every $G$-flow of $O_g$ is a trivial coloring $G$-flow.
\end{lemma}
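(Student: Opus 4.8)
The plan is to exploit the fact that $O_g$, being the genus $g$ trivial handlebody-knot, admits a diagram $D$ with no crossings. First I would fix such a crossingless diagram $D$ of $O_g$; it consists only of arcs and vertices, with no crossing relations. Given an arbitrary group $G$ and an arbitrary $G$-flow $\rho$ of $O_g$, represented on $D$ by a $G$-flow $\phi : \mathcal{A}(D) \to G$, the only constraints on $\phi$ come from the vertex conditions in Figure \ref{flow}. The key observation is that a crossingless diagram of $O_g$ can be taken so that its underlying trivalent graph is a wedge of $g$ circles (a bouquet), drawn in the plane with no crossings; in particular one can choose the diagram so that every arc is, in the fundamental group, conjugate into a free generating set and, more importantly, so that each vertex relation forces the flow values to behave multiplicatively along the handles. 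Since $\pi_1(S^3 - O_g)$ is free of rank $g$, the $G$-flow $\rho$ is determined by its values $g_1, \ldots, g_g \in G$ on the $g$ free generators.

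Next I would analyze an arbitrary $X$-coloring $C \in \col_X(D,\rho)$ for an arbitrary $G$-family of quandles $X$. Using the coloring rules in Figure \ref{G-family_of_qnd_col.} at the vertices (there are no crossing rules to apply), I would track how the colors propagate around each of the $g$ circles of the bouquet. Because the diagram has no crossings, every arc color is obtained from a ``base" color at the wedge point by successively applying operations $*^{h}$ with $h$ an identity element $e$ of $G$ — this is where the $G$-flow values enter: along a crossingless loop the relevant flow contribution that acts on a color via $*^{(\cdot)}$ ends up being $e$, since the two arcs meeting the relevant over-strand at a vertex carry flow values whose ``difference" is trivial when there is no crossing to create genuine conjugation. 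The $G$-family axiom $x *^e y = x$ then forces every arc to receive the same color as the base arc, so $C$ is constant, i.e.\ trivial. Hence $\rho \in \flowtrivial(O_g; G)$.

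The main obstacle I anticipate is making precise the claim that ``in a crossingless diagram, every color-propagation step uses $*^e$.'' This requires choosing the crossingless diagram carefully — so that at each vertex the over/under data and the flow values are arranged to make the exponent trivial — and then verifying the vertex coloring rule of Figure \ref{G-family_of_qnd_col.} really does reduce, in this configuration, to equality of the two relevant colors. An alternative, perhaps cleaner, route that avoids bouquet bookkeeping is to argue directly: any $G$-flow of $O_g$ extends over the handlebody $O_g$ itself (since $O_g \subset S^3$ is standard, the inclusion-induced $\pi_1(O_g) \to \pi_1(S^3 - O_g)$ fits into the standard genus-$g$ Heegaard picture), and a coloring constrained by such a flow is pulled back from a structure on the solid handlebody where the flow is ``inessential''; combined with the already-noted fact that $\rho_e$ is a trivial coloring $G$-flow and that crossingless diagrams impose no crossing relations, one concludes triviality of every coloring. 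Either way, the crux is the reduction to the vertex rule with trivial $G$-exponent, and the rest is a direct application of the $x *^e y = x$ axiom.
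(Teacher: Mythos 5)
Your proposal follows essentially the same route as the paper: fix the standard crossingless diagram of $O_g$, note that any $G$-flow on it is determined by elements $a_1,\dots,a_g\in G$ on the loops with the connecting arcs carrying $e$, and conclude that every $X$-coloring is constant. The step you single out as the main obstacle is in fact immediate and needs no appeal to $x*^e y=x$: since the diagram has no crossings, the only coloring constraints are the vertex conditions of Figure \ref{G-family_of_qnd_col.}, which simply equate the colors of the three arcs incident to each vertex, so connectedness of the diagram forces every coloring to be trivial.
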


\begin{proof}
Let $O_g$ be the diagram of the handlebody-knot $O_g$ depicted in Figure \ref{trivial_hdbdy-knot_flow}, 
where we note that we use the same symbol $O_g$ as the genus $g$ trivial handlebody-knot.
Any $G$-flow $\rho$ of $O_g$ is represented as in Figure \ref{trivial_hdbdy-knot_flow}, 
where $a_i \in G$ for any $i=1,\ldots, g$ and $e$ is the identity element of $G$.
Hence it is easy to see that 
for any $G$-family of quandles $X$, 
every $X$-coloring of $(O_g,\rho)$ is trivial.
\end{proof}

\begin{figure}[htb]
\begin{center}
\includegraphics[width=70mm]{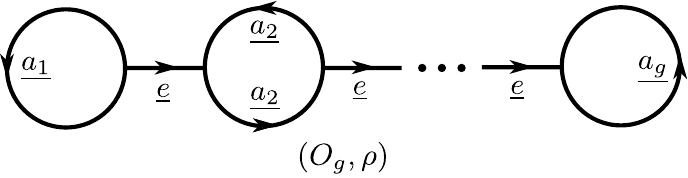}
\end{center}
\caption{A $G$-flow of $O_g$.}\label{trivial_hdbdy-knot_flow}
\end{figure}

By Lemma \ref{trivial hdbdy-knot flow}, 
for any $G$-family of quandles $X$ and $\rho \in \flow(O_g;G)$, 
we obtain that $\#\col_X(O_g,\rho)=\#X$.

%%%%%%%%%%%%%%%%%%%%%%%%%%%%%%%%%%%%%%%%%%%%%%%%%%%%%%%%%%%%%%%%%%%%%%%%%%%%%%%%%%%%%%%%%%%%
\section{Module structures of coloring sets by $G$-families of Alexander quandles}

Let $(D,\rho)$ be a $G$-flowed diagram of an $S^1$-oriented handlebody-link 
and let $X$ be a $G$-family of Alexander quandles as a right $R[G]$-module for some ring $R$.
Then $\col_X(D,\rho)$ is a right $R$-module 
with the action $(C \cdot r) (x) = C(x)r$ 
and the addition $(C + C')(x)= C(x)+C'(x)$ 
for any $C, C' \in \col_X(D,\rho)$, $x \in \mathcal{A}(D,\rho)$ and $r \in R$.
In this section, we consider the module structures of coloring sets by $G$-families of Alexander quandles.

Let $R$ and $R'$ be rings.
We denote by $M(m,n;R)$ the set of $m \times n$ matrices over $R$ 
and set $M(n;R):=M(n,n;R)$.
We denote by $GL(n;R)$ the set of $n \times n$ invertible matrices over $R$.
We can regard a matrix in $M(m,n;M(k,l;R))$ as a matrix in $M(km,ln;R)$.
We call it a \emph{flat matrix}.
For any $(a_{i,j}) \in M(m,n;R)$ and map $f :R \to R'$, 
we define $f((a_{i,j}))=(f(a_{i,j})) \in M(m,n;R')$.

Let $R$ be a commutative ring, $G$ be a group and let $X$ be a right $R[G]$-module.
Then $X$ is also an $R$-module.
We assume that $X$ is a finitely generated free $R$-module, 
that is, 
$X$ is isomorphic to $R^d$ for some $d \in \z_{\geq0}$.
Let $A=(a_{i,j}) \in M(n,m;R[G])$ 
and let $f_A : X^n \to X^m$ be an $R$-homomorphism defined by $f_A((x_1,\ldots, x_n))=(x_1,\ldots, x_n)A$, 
where $(x_1,\ldots, x_n)A$ means $(\sum_{i=1}^{n}x_i a_{i,1}, \ldots, \sum_{i=1}^{n}x_i a_{i,m})$.

%\begin{remark}
%Let $G$ be a group.
%For any category $\mathcal{C}$ and its object $X \in \mathrm{Ob}({\mathcal{C}})$, 
%a group action of $G$ on $X$ is a group homomorphism $G \to \mathrm{Aut}_\mathcal{C}(X)$.
%\end{remark}

An action of $G$ on $X$ is a group homomorphism $\eta:G \to \mathrm{Aut}_{R\mathchar`-\mathsf{Mod}}(X) \cong GL(d;R)$, 
where $R\mathchar`-\mathsf{Mod}$ is the category of $R$-modules, 
and $\mathrm{Aut}_{R\mathchar`-\mathsf{Mod}}(X)$ is the automorphism group of $X$.
Then $\eta$ induces an $R$-homomorphism $\widetilde{\eta} : R[G] \to M(d;R)$ 
satisfying the commutative diagram 
\[
\xymatrix{
G \ar[r]^-\eta \ar@{^{(}-_>}[d]_-{\mathrm{inclusion}} & \mathrm{Aut}_{R\mathchar`-\mathsf{Mod}}(X) \cong GL(d;R) \ar@{^{(}-_>}[rr]^-{\mathrm{inclusion}} & & M(d;R) \\ 
R[G]\ar[rrru]_-{\widetilde{\eta}} & & }.
\]
That is, 
for any $(r_1, \ldots, r_d) \in R^d \cong X$ and $\sum_{g \in G}r_gg \in R[G]$, 
\[
(r_1, \ldots, r_d) \cdot \sum_{g \in G}r_gg
=(r_1, \ldots, r_d) \sum_{g \in G}r_g \eta(g)
=(r_1, \ldots, r_d)  \widetilde{\eta}(\sum_{g \in G}r_gg).
\]
Then for any $A \in M(n,m,R[G])$, 
it follows that 
\begin{align*}
\kernel f_A 
& = \left\{ (x_1,\ldots,x_n) \in X^{n} \middle | (x_1, \ldots, x_{n})A =\bm{0} \right\}\\
& \cong \left\{((r_{1,1},\ldots, r_{1,d}), \ldots, (r_{n,1},\ldots, r_{n,d})) \in (R^d)^{n} \middle | \right. \\
& \left. \qquad \quad((r_{1,1},\ldots, r_{1,d}), \ldots, (r_{n,1},\ldots, r_{n,d})) \widetilde{\eta}(A) =\bm{0} \right\}\\
& \cong \left\{ (r_{1,1}, \ldots, r_{n,d}) \in R^{dn} \middle | (r_{1,1}, \ldots, r_{n,d}) \widetilde{\eta}(A) =\bm{0} \right\}, 
\end{align*}
where $\widetilde{\eta}(A) \in M(n,m;M(d;R))$, 
and we regard $\widetilde{\eta}(A)$ as the flat matrix in $M(dn,dm;R)$ in the last line.
Therefore when $R$ is a field $F$, 
it follows that 
$\kernel f_A$ is a vector subspace of $X^n$ over $F$, 
and $\dim_F \kernel f_A=dn-\rank \widetilde{\eta}(A)$.
In particular, 
if $X$ is an extension field of $F$, 
the map $f_A$ is also an $X$-linear map, 
and $\kernel f_A$ is a vector subspace of $X^n$ over $X$.
An action of $G$ on $X$ is a group homomorphism $\zeta:G \to  \mathrm{Aut}_{X\mathchar`-\mathsf{Vect}}(X) \cong X$, 
where $X\mathchar`-\mathsf{Vect}$ is the category of vector spaces over $X$.
Then $\zeta$ induces an $F$-homomorphism $\widetilde{\zeta} : F[G] \to X$ 
satisfying the commutative diagram 
\[
\xymatrix{
G \ar[r]^-\zeta \ar@{^{(}-_>}[d]_-{\mathrm{inclusion}} & \mathrm{Aut}_{X\mathchar`-\mathsf{Vect}}(X) \cong X \\ 
F[G]\ar[ru]_-{\widetilde{\zeta}} & }.
\]
That is, 
for any $x \in X$ and $\sum_{g \in G}k_gg \in F[G]$, 
\[
x \cdot \sum_{g \in G}k_gg
=x \sum_{g \in G}k_g \zeta(g)
=x  \widetilde{\zeta}(\sum_{g \in G}k_gg).
\]
Then for any $A \in M(n,m,F[G])$, 
it follows that 
\begin{align*}
\kernel f_A 
& = \left\{ (x_1,\ldots,x_n)\in X^{n} \middle | (x_1, \ldots, x_{n})A =\bm{0} \right\}\\
& \cong \left\{ (x_1,\ldots,x_n) \in X^{n} \middle | (x_1, \ldots, x_{n})\widetilde{\zeta}(A) =\bm{0} \right\},
\end{align*}
where $\widetilde{\zeta}(A) \in M(n,m;X)$.
Therefore it follows that 
$\dim_X \kernel f_A=n-\rank \widetilde{\zeta}(A)$ and $d \cdot \dim_X \kernel f_A= \dim_F \kernel f_A$.

In this paper, 
we assume that every component of a diagram of any $S^1$-oriented handlebody-link 
has a crossing at least 1.
Let $(D,\rho)$ be a $G$-flowed diagram of an $S^1$-oriented handlebody-link 
and let $X$ be a $G$-family of Alexander quandles as a right $R[G]$-module for some ring $R$.
We put $C(D,\rho) = \{c_1, \ldots, c_{n_1}\}$ and $V(D,\rho) = \{\tau_1, \ldots, \tau_{2n_2} \}$, 
where $C(D,\rho)$ and $V(D,\rho)$ are the set of all crossings of $(D,\rho)$ and the one of all vertices of $(D, \rho)$ respectively, 
and the sign of $\tau_i$ is 1 for any $i=1, \ldots, n_2$ and $-1$ for any $i=n_2+1, \ldots, 2n_2$.
Put $n:=n_1+3n_2$.
We denote by $x_i$ each arc of $(D, \rho)$ as shown in Figure \ref{arcs}, 
which implies $\mathcal{A}(D,\rho)= \{x_1, \ldots, x_{n} \}$.
We denote by $u_i, v_i, w_i, \alpha_i, \beta_i$ and $\gamma_i$ the arcs incident to a crossing $c_i$ or a vertex $\tau_i$ 
as shown in Figure \ref{notation}.

\begin{figure}[htb]
\begin{center}
\includegraphics[width=80mm]{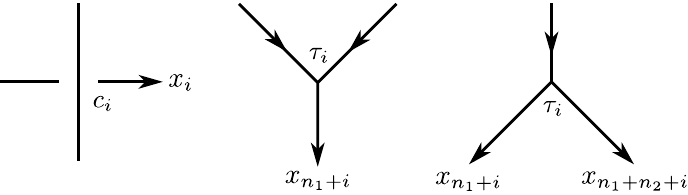}
\end{center}
\caption{Arcs.}\label{arcs}
\end{figure}

\begin{figure}[htb]
\begin{center}
\includegraphics[width=75mm]{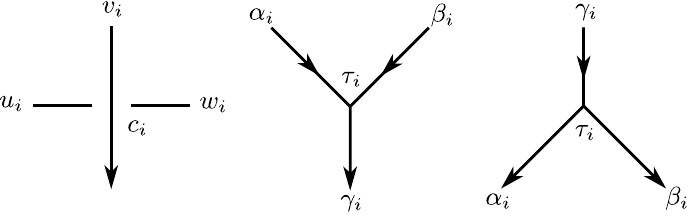}
\end{center}
\caption{Notations.}\label{notation}
\end{figure}

For any arcs $x,x' \in \mathcal{A}(D,\rho)$, 
we put 
\begin{align*}
\delta (x,x'):=
\begin{cases}
1 & (x=x'),\\
0 & (x \neq x').
\end{cases}
\end{align*}
Then we define a matrix 
$A(D,\rho;X)= (a_{i,j}) \in M(n_1+4n_2,n;R[G])$ by 
\begin{align*}
a_{i,j}=
\begin{cases}
\delta(u_i,x_j)\rho (v_i)+\delta(v_i,x_j)(e-\rho (v_i))-\delta(w_i,x_j) & (1 \leq i \leq n_1),\\
\delta(\alpha_{i-n_1},x_j)-\delta(\gamma_{i-n_1},x_j) & (n_1+1 \leq i \leq n_1+2n_2),\\
\delta(\beta_{i-n_1-2n_2},x_j)-\delta(\gamma_{i-n_1-2n_2},x_j) & (n_1+2n_2+1 \leq i \leq n_1+4n_2).
\end{cases}
\end{align*}
We note that 
$A(D,\rho;X)$ is determined 
up to permuting of rows and columns of the matrix.
Then we can identify $\col_X(D,\rho)$ with the right $R$-module 
\begin{align*}
\left\{ (z_1, \ldots, z_{n}) \in X^{n} \middle | (z_1, \ldots, z_{n})A(D,\rho;X)^T=\bm{0} \right\}
\end{align*}
with the action $(z_1, \ldots, z_{n})r=(z_1r, \ldots, z_{n}r)$ 
for any $(z_1, \ldots, z_{n}) \in \col_X(D,\rho)$ and $r \in R$, 
where $A(D,\rho;X)^T$ is the transposed matrix of $A(D,\rho;X)$.
Hence if $R$ is a commutative ring and $X \cong R^d$ as $R$-modules for some $d \in \z_{\geq 0}$, 
it follows that $\col_X(D,\rho) \cong \kernel f_{A(D,\rho;X)^T}$, 
where we remind that $f_{A(D,\rho;X)^T}:X^n \to X^{n_1+4n_2}$ is an $R$-homomorphism 
defined by $f_{A(D,\rho;X)^T}(z_1, \ldots, z_{n})=(z_1, \ldots, z_{n})A(D,\rho;X)^T$.

For example, let $(E,\psi)$ be the $G$-flowed diagram of the handlebody-knot 
depicted in Figure \ref{ex mtx}.
Then for a $G$-family of Alexander quandles $X$ as a right $R[G]$-module, we have 
\begin{align*}
A(E,\psi;X)=
\begin{pmatrix}
b & 0 & e-b & 0 & -1\\
e-a & a & -1 & 0 & 0 \\
0 & 1 & -1 & 0 & 0\\
-1 & 0 & 0 & 1 & 0\\
0 & 0 & -1 & 1 & 0\\
-1 & 0 & 0 & 0 & 1
\end{pmatrix} 
\in M(6,5;R[G])
\end{align*}
and
\begin{align*}
\col_X(E,\psi) \cong 
\left\{ (z_1, \ldots, z_5) \in X^{5} \middle | (z_1, \ldots, z_5)A(E,\psi;X)^T=\bm{0} \right\}.
\end{align*}

\begin{figure}[htb]
\begin{center}
\includegraphics[width=45mm]{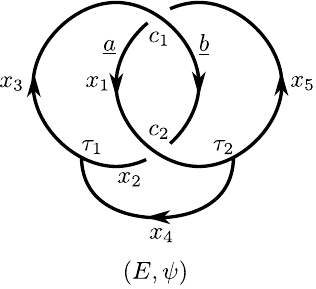}
\end{center}
\caption{A $G$-flowed diagram $(E,\psi)$.}\label{ex mtx} 
\end{figure}

\begin{example}\label{ex Alex1}
Let $X$ be an Alexander quandle as an $R[t^{\pm 1}]$-module for some commutative ring $R$ 
and put $k:= \type X$.
Then $X$ is an $R[\z_k]$-module with $x \cdot t^i=xt^i$ 
for any $x \in X$ and $t^i \in \z_k$, 
where we regard $\z_k$ as $\langle t \mid t^k \rangle$.
Hence $X$ is a $\z_k$-family of Alexander quandles.
Therefore for a $\z_k$-flowed diagram $(D,\rho)$ of an $S^1$-oriented handlebody-link, 
$\col_X(D,\rho)$ is an $R$-module.
When $R$ is a field $F$ and $X \cong F^d$ as vector spaces over $F$ for some $d \in \z_{\geq 0}$, 
it follows that $\col_X(D,\rho)$ is a vector space over $F$, 
and $\dim_F \col_X(D,\rho)=dn-\rank \widetilde{\eta}(A(D,\rho;X))$, 
where $n=\#\mathcal{A}(D,\rho)$.
In particular, if $X$ is an extension field of $F$, 
it follows that $\col_X(D,\rho)$ is also a vector space over $X$, 
and $\dim_X \col_X(D,\rho)=n-\rank \widetilde{\zeta}(A(D,\rho;X))$.
\end{example}

\begin{example}\label{ex Alex2}
Let $R$ be a ring, $X=R^d$ and $G=GL(d;R)$ for some $d \in \z_{\geq 0}$.
Then $X$ is a right $R[G]$-module with 
$(r_1,\ldots, r_d) \cdot (a_{i,j})=(\sum_{i=1}^dr_ia_{i,1}, \ldots, \sum_{i=1}^dr_ia_{i,d})$ 
for any $(r_1,\ldots, r_d) \in X$ and $(a_{i,j}) \in G$.
Hence $X$ is a $G$-family of Alexander quandles.
Therefore for a $G$-flowed diagram $(D,\rho)$ of an $S^1$-oriented handlebody-link, 
$\col_X(D,\rho)$ is a right $R$-module.
When $R$ is a field $F$, 
it follows that $\col_X(D,\rho)$ is a vector space over $F$, 
and $\dim_F \col_X(D,\rho)=dn-\rank \widetilde{\eta}(A(D,\rho;X))$,
where $n=\#\mathcal{A}(D,\rho)$.
\end{example}

%\begin{example}\label{ex Alex3}
%Let $F$ be a field and $f(t)=\sum_{i=0}^{d-1} a_it^{i} \in F[t^{\pm 1}]$.
%Then $X=F[t^{\pm 1}]/(f(t))$ is an Alexander quandle as an $F[t^{\pm 1}]$-module.
%$X$ is also a vector space over $F$ and $X \cong \langle 1,t,\ldots, t^{d-1} \rangle \cong F^d$.
%
%Put $k= \type X$, which implies $t^k=1$ in $X$.
%By Example \ref{ex Alex1}, 
%$X$ is a $\z_k$-family of Alexander quandles as an $F[\z_k]$-module.
%
%Therefore for a $\z_k$-flowed diagram $(D,\rho)$ of an $S^1$-oriented handlebody-link, 
%$\col_X(D,\rho)$ are vector spaces over $F$.
%Furthermore it follows that 
%\begin{align*}
%\col_X(D,\rho)
%& \cong \left\{ 
%\begin{pmatrix}
%z_1\\
%z_2\\
%\vdots\\
%z_{n}
%\end{pmatrix}
%\in X^{n}
%\middle |
%(z_1, z_2, \ldots, z_{n}) \cdot A(D,\rho;X)^T
%=\bm{0}
%\right\}\\
%& \cong \left\{ 
%\begin{pmatrix}
%z_1\\
%z_2\\
%\vdots\\
%z_{n}
%\end{pmatrix}
%\in X^{n}
%\middle |
%(z_1, z_2, \ldots, z_{n}) \zeta(A(D,\rho;X)^T)
%=\bm{0}
%\right\},
%\end{align*}
%where $\zeta$ is the map from $F[\z_k]$ to $X=F[t^{\pm 1}]/(f(t))$ 
%defined by $\zeta(\sum_{i=0}^{k-1}a_i[i])=\sum_{i=0}^{k-1}a_it^i$.
%In particular, if $f(t)$ is an irreducible polynomial, 
%$X$ is an extension field of $F$, 
%and $\col_X(D,\rho)$ is a vector space over $X$.
%with the action $(z_1,\ldots, z_n)^T \cdot z:=(z_1z,\ldots, z_nz)^T$.
%Then it follows that 
%$ \dim_X\col_X(D,\rho)=n- \rank  \zeta(A(D,\rho;X))$ 
%and $d \cdot \dim_X\col_X(D,\rho)=\dim_F\col_X(D,\rho)$.
%\end{example}

%%%%%%%%%%%%%%%%%%%%%%%%%%%%%%%%%%%%%%%%%%%%%%%%%%%%%%%%%%%%%%%%%%%%%%%%%%%%%%%%%%%%%%%%%%%%
\section{Results}

In this section, 
we provide essential conditions to be constituent handlebody-knots 
by using colorings by $G$-families of quandles.
Furthermore, as the corollaries, 
we give lower bounds for the tunnel number and the cutting number of handlebody-knots.

\begin{theorem}\label{thm col}
Let $H$ and $H'$ be $S^1$-oriented genus $g$ and $g'$ $(g'<g)$ handlebody-knots 
and $D$ and $D'$ be their diagrams respectively.
Let $\rho' \in \flow(H',G)$ and 
$X$ be a $G$-family of Alexander quandles as a right $F[G]$-module for some field $F$, 
where $X \cong F^d$ as vector spaces over $F$ for some $d \in \z_{\geq 0}$.
If $H'<H$, 
there exists $\rho \in \flow(H;G)$ 
such that $\im \rho = \im \rho'$
and
\[
\dim_F \col_X(D',\rho')-\dim_F \col_X(D,\rho) \leq d(g-g').
\]
\end{theorem}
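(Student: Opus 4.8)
The plan is to reduce the statement to the case of a single meridian disk and then telescope. Since $H'<H$, there are mutually disjoint meridian disks $\Delta_1,\dots,\Delta_{g-g'}$ of $H$ with $\cl(H-\bigcup_i N(\Delta_i))\cong H'$; removing them one at a time yields a chain of $S^1$-oriented handlebody-knots
\[
H=H_0>H_1>\cdots>H_{g-g'}=H',
\]
where $H_j=\cl(H_{j-1}-N(\Delta_j))$ has genus $g-j$. I would fix a diagram $D'$ of $H'$ and build diagrams $D_j$ of $H_j$ with $D_{j-1}=D_j\cup\gamma_j$, where $\gamma_j$ is the tunnel arc (carrying two new trivalent vertices) dual to $\Delta_j$. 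By the invariance Proposition, $\dim_F\col_X(D_j,\rho_j)$ is independent of this choice, so it suffices to prove the one-disk inequality $\dim_F\col_X(D_j,\rho_j)-\dim_F\col_X(D_{j-1},\rho_{j-1})\le d$ for each $j$ and to sum the telescoping differences, which collapse to $\dim_F\col_X(D',\rho')-\dim_F\col_X(D,\rho)$.

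For the flows, the inclusion of exteriors $S^3-H_{j-1}\hookrightarrow S^3-H_j$ induces homomorphisms $p_j\colon\pi_1(S^3-H_{j-1})\to\pi_1(S^3-H_j)$, and $E(H_j)$ is obtained from $E(H_{j-1})$ by attaching a $2$-handle along $\partial\Delta_j$; hence each $p_j$ is surjective with kernel normally generated by the meridian $[\partial\Delta_j]$. I would therefore define $\rho\in\flow(H;G)$, and more generally each $\rho_j$, as the pullback of $\rho'$ along the composite surjection $\pi_1(S^3-H_j)\twoheadrightarrow\pi_1(S^3-H')$. Surjectivity immediately gives $\im\rho=\im\rho'$, the $\rho_j$ are mutually compatible ($\rho_{j-1}=\rho_j\circ p_j$), and the decisive feature of this choice is that $[\partial\Delta_j]$ lies in $\kernel p_j$, so $\rho_{j-1}$ assigns the identity $e$ to every arc of the new tunnel $\gamma_j$.

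The one-disk inequality is then obtained from a restriction map $r\colon\col_X(D_{j-1},\rho_{j-1})\to\col_X(D_j,\rho_j)$. This map is well defined precisely because $\rho_{j-1}(\gamma_j)=e$: at a crossing where $\gamma_j$ is the over-arc, the defining relation reads $z_w=z_u*^e z_\gamma=z_u$ by the $G$-family axiom $x*^e y=x$, so the under-arc keeps its colour, while at each of the two new vertices the three incident colours coincide. Consequently every arc of $D_j$ carries a well-defined colour in any $X$-coloring of $D_{j-1}$, and this restriction satisfies the crossing and vertex conditions of $D_j$. The map $r$ is injective, since a coloring of $D_{j-1}$ is recovered from its restriction by propagating the colour of $\gamma_j$ (starting from the vertex on $\partial\Delta_j$, where it equals the colour of the incident old arc) through the crossings along $\gamma_j$; and a coloring of $D_j$ lies in $\im r$ if and only if the colour so propagated matches the prescribed colour at the other endpoint of $\gamma_j$, a single $X$-valued linear condition. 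Thus $\im r=\kernel\mu$ for an $F$-linear map $\mu\colon\col_X(D_j,\rho_j)\to X$, whence
\[
\dim_F\col_X(D_j,\rho_j)-\dim_F\col_X(D_{j-1},\rho_{j-1})=\dim_F\im\mu\le\dim_F X=d.
\]

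The main obstacle is the well-definedness of $r$ when the tunnel is knotted or linked with the rest of the diagram: in general $\gamma_j$ must over-cross old arcs, and such over-crossings would \emph{a priori} alter the colours carried by $D_j$. The resolution, and the heart of the argument, is that the pullback flow is trivial on the tunnel, so $*^e$ is the trivial operation and these over-crossings become colour-preserving. Establishing $\rho_{j-1}(\gamma_j)=e$ through the $2$-handle description of cutting along $\Delta_j$ is therefore the key step, and it is exactly what forces the correct direction of the inequality (the restriction map runs from the higher-genus coloring space into the lower-genus one, so $H'$ may gain, but never lose, at most $d$ dimensions per disk).
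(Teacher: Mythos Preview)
Your proof is correct, and the overall architecture---a chain of handlebody-knots with genus changing by one, a bound of $d$ on the dimension drop at each step, and telescoping---is the same as the paper's. The two arguments diverge in how they treat a single tunnel.

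The paper first arranges the diagrams so that the tunnel is \emph{local}: one can always choose $(D_i,\rho_i)$ and $(D_{i+1},\rho_{i+1})$ to be identical outside a small disk in which $D_{i+1}$ has a short arc (with no crossings) joining two distinct arcs $x_1,x_2$ of $D_i$. With this simplification the only new coloring relation is literally $z_1=z_2$, and appending the row $(e,-e,0,\dots,0)$ to $A(D_i,\rho_i;X)$ raises the rank of the flat matrix by at most $d$. The flow $\rho_{i+1}$ is obtained simply by extending $\rho_i$ across the new vertex, and $\im\rho_i=\im\rho_{i+1}$ follows diagrammatically.

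You instead keep an arbitrary diagram of $H'$ and allow the tunnel arc $\gamma_j$ to cross other strands. Your compensating idea is to manufacture $\rho$ (and each intermediate $\rho_j$) as the pullback of $\rho'$ along the surjection $\pi_1(S^3-H)\twoheadrightarrow\pi_1(S^3-H')$ coming from the $2$-handle attachments, so that every arc of $\gamma_j$ carries the flow $e$. This is exactly what makes over-crossings of $\gamma_j$ color-preserving, so your restriction map $r$ is well defined and injective, with image cut out by a single $X$-valued equation.

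Both routes produce a flow with $\im\rho=\im\rho'$ and the same inequality. The paper's is a bit more elementary once one accepts the diagram simplification; yours is more intrinsic, giving a canonical choice of $\rho$ and explaining transparently why the added constraint has $F$-rank at most $d$ even when the tunnel is knotted or linked.
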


\begin{proof}
Assume that $H'<H$ and put $m:=g-g'$.
There exist $S^1$-oriented handlebody-knots $H_0, H_1, \ldots, H_m$ 
such that $H_0=H'$, $H_m=H$, $H_i<H_{i+1}$ for any $i=0,1,\ldots, m-1$, and the genus of $H_i$ is $g'+i$ for any $i=0,1,\ldots, m$.
For any $\rho_i \in \flow(H_i;G)$, 
the handlebody-knots $H_i$ and $H_{i+1}$ respectively have $G$-flowed diagrams $(D_i,\rho_i)$ and $(D_{i+1},\rho_{i+1})$ 
which are identical except in the neighborhood of a point where they differ as shown in Figure \ref{tunnel}.
Here we may assume that  
the two arcs of $(D_i,\rho_i)$ in the left of Figure \ref{tunnel} are $x_1$ and $x_2$ $(x_1 \neq x_2)$, 
where we put $\mathcal{A}(D_i,\rho_i)=\{x_1,\ldots, x_n\}$.
It is easy to see that $\im \rho_i=\im \rho_{i+1}$.
Then we have 
\begin{align*}
\col_X (D_i,\rho_i) \cong \left\{ (z_1, \ldots, z_n) \in X^n \middle | (z_1, \ldots, z_n) A(D_i,\rho_i;X)^T=\bm{0} \right\}
\end{align*}
as vector spaces over $F$.
Since the coloring set $\col_X(D_{i+1},\rho_{i+1})$ is obtained from $\col_X(D_i,\rho_i)$ 
by adding one relation $z_1=z_2$, we have
\begin{align*}
\col_X(D_{i+1},\rho_{i+1}) & \cong \left\{(z_1, \ldots, z_n) \in X^n\middle |(z_1, \ldots, z_n) A(D_i,\rho_i;X)^T=\bm{0},z_1=z_2\right\}\\
& \cong \left\{ (z_1, \ldots, z_n) \in X^n\middle |(z_1, \ldots, z_n)
\begin{pmatrix}
A(D_i,\rho_i;X)\\
\bm{a}
\end{pmatrix}^T
=\bm{0}\right\}
\end{align*}
as vector spaces over $F$, 
where $\bm{a}=(e,-e,0,\ldots,0)$.
We note that 
$\widetilde{\eta}(e)$ is the $d \times d$ identity matrix.
Therefore it follows that 
\[
0 \leq \rank \widetilde{\eta}(
\begin{pmatrix}
A(D_i,\rho_i;X)\\
\bm{a}
\end{pmatrix}^T) 
-\rank \widetilde{\eta}(A(D_i,\rho_i;X)^T) \leq d
\]
as flat matrices.
Therefore we obtain that 
\[
0 \leq \dim_F\col_X(D_i,\rho_i)-\dim_F\col_X(D_{i+1},\rho_{i+1}) \leq d.
\]

Consequently, 
for any $\rho' \in \flow(H',G)$, 
there exists $\rho \in \flow(H;G)$ 
such that $\im \rho=\im \rho'$ and 
$\dim_F \col_X(D',\rho')-\dim_F \col_X(D,\rho) \leq dm=d(g-g')$.
\end{proof}

\begin{figure}[h]
\begin{center}
\includegraphics[width=70mm]{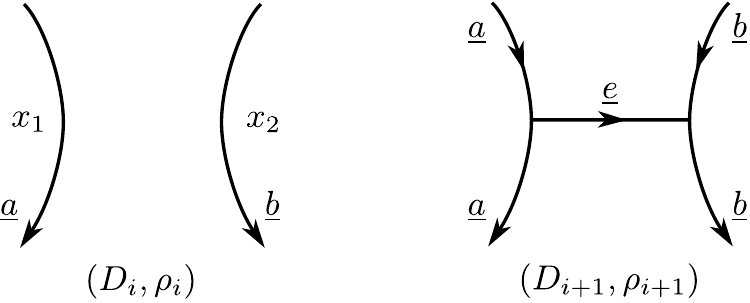}
\end{center}
\caption{Adding an arc.}\label{tunnel}
\end{figure}

\begin{theorem}\label{thm flow}
Let $H$ and $H'$ be $S^1$-oriented genus $g$ and $g'$ $(g'<g)$ handlebody-knots respectively 
and let $G$ be a group.
If $H'<H$, 
it follows that 
\[
\# \flowtrivial (H';G) \leq \# \flowtrivial (H;G).
\]
\end{theorem}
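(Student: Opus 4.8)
The plan is to reduce to the elementary case $g-g'=1$ and then promote the correspondence of flows already implicit in the proof of Theorem~\ref{thm col} to an injection on trivial coloring flows; this is legitimate because $\flowtrivial(H;G)$ depends only on $H$, not on a chosen diagram. Concretely, with $m:=g-g'$, I would use the chain appearing in the proof of Theorem~\ref{thm col}: there are $S^1$-oriented handlebody-knots $H'=H_0<H_1<\cdots<H_m=H$, with $H_i$ of genus $g'+i$, such that for every $\rho_i\in\flow(H_i;G)$ the handlebody-knots $H_i$ and $H_{i+1}$ carry $G$-flowed diagrams $(D_i,\rho_i)$ and $(D_{i+1},\rho_{i+1})$ agreeing outside a small disk, inside which $D_{i+1}$ arises from $D_i$ by the ``adding an arc'' move of Figure~\ref{tunnel}. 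The assignment $\rho_i\mapsto\rho_{i+1}$ is injective, since the two diagrams agree on all of $D_i$ and so $\rho_{i+1}$ recovers $\rho_i$. Thus it suffices to check that this assignment carries $\flowtrivial(H_i;G)$ into $\flowtrivial(H_{i+1};G)$; composing the resulting $m$ injections then yields $\#\flowtrivial(H';G)\le\#\flowtrivial(H;G)$.

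For that key step, fix $i$, assume $\rho_i\in\flowtrivial(H_i;G)$, and let me show $\rho_{i+1}\in\flowtrivial(H_{i+1};G)$. I would take an arbitrary $G$-family of quandles $X$ and an arbitrary $C\in\col_X(D_{i+1},\rho_{i+1})$ and prove $C$ is trivial. The move of Figure~\ref{tunnel} adds one new arc $\gamma$, splits each of the two affected strands $x_1,x_2$ of $D_i$ into two subarcs, and inserts two trivalent vertices; on the flow side $\rho_{i+1}$ agrees with $\rho_i$ away from the modified disk and assigns $e$ to $\gamma$, while on the coloring side the vertex condition equates the three arcs meeting at each new vertex. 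Hence the two subarcs coming from each of $x_1,x_2$ carry the same $\rho_{i+1}$-value and the same $C$-value, so deleting $\gamma$ and re-merging these pairs of subarcs turns $C$ into a well-defined assignment $\bar C\colon\mathcal{A}(D_i,\rho_i)\to X$; since the crossings and vertices of $D_i$ are exactly the crossings and vertices of $D_{i+1}$ lying outside the modified disk, $\bar C\in\col_X(D_i,\rho_i)$. As $\rho_i$ is a trivial coloring flow, $\bar C$ is a constant map $\bar C\equiv c_0$, so $C\equiv c_0$ on every arc of $D_{i+1}$ other than $\gamma$; finally the vertex condition at one of the new vertices forces $C(\gamma)=c_0$ too, so $C$ is trivial.

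The main obstacle is the passage from $C$ to $\bar C$: one must read off from the precise local picture of Figure~\ref{tunnel} that the subarcs created by the move re-merge legitimately (agreeing flows and colors) and that $\bar C$ satisfies all the coloring relations of $D_i$ --- that is, that the ``adding an arc'' move, run backwards, sends $X$-colorings to $X$-colorings --- and then that triviality of $\bar C$ genuinely propagates onto $\gamma$. Once this is in hand, the reduction to $m=1$ and the composition of the injections are routine bookkeeping.
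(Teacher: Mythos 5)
Your proposal is correct and follows essentially the same route as the paper: the same chain $H'=H_0<\cdots<H_m=H$, the same injective assignment $\rho_i\mapsto\rho_{i+1}$ via the arc-adding move, and the same key observation that an $X$-coloring of $(D_{i+1},\rho_{i+1})$ restricts (by ignoring the added arc) to an $X$-coloring of $(D_i,\rho_i)$, forcing triviality to propagate. Your explicit check that the vertex condition pins down the color of the new arc $\gamma$ is a small point the paper leaves implicit, but the argument is the same.
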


\begin{proof}
Assume that $H'<H$ and put $m:=g-g'$.
There exist $S^1$-oriented handlebody-knots $H_0, H_1, \ldots, H_m$ 
such that $H_0=H'$, $H_m=H$, $H_i<H_{i+1}$ for any $i=0,1,\ldots, m-1$, and the genus of $H_i$ is $g'+i$ for any $i=0,1,\ldots, m$.
For any $\rho_i \in \flowtrivial(H_i;G)$, 
the handlebody-knots $H_i$ and $H_{i+1}$ respectively have 
a trivial coloring $G$-flowed diagram $(D_i,\rho_i)$ and a $G$-flowed diagram $(D_{i+1},\rho_{i+1})$ 
which are identical except in the neighborhood of a point where they differ as shown in Figure \ref{tunnel}.
Assume that $\rho_{i+1}$ is not a trivial coloring $G$-flow of $H_{i+1}$, 
which means that 
there exists a $G$-family of quandles $X$ and 
a non-trivial $X$-coloring $C$ of $(D_{i+1},\rho_{i+1})$.
Then $C$ induces a non-trivial $X$-coloring of $(D_i,\rho_i)$, 
that is, 
the $X$-coloring of $(D_i,\rho_i)$ obtained from $C$ 
by ignoring the arc we added as shown in Figure \ref{tunnel} is not trivial.
This contradicts to $\rho_i \in \flowtrivial(H_i;G)$.
Hence $\rho_{i+1}$ is a trivial coloring $G$-flow of $H_{i+1}$. 
Therefore 
we have a map from $\flowtrivial (H_i;G)$ to $\flowtrivial (H_{i+1};G)$ 
sending $\rho_i$ into $\rho_{i+1}$, 
and it is easy to see that the map is injective.
Consequently, 
we obtain that $\# \flowtrivial (H_i;G) \leq \# \flowtrivial (H_{i+1};G)$, 
which implies that $\# \flowtrivial (H';G) \leq \# \flowtrivial (H;G)$.
\end{proof}

By Theorems \ref{thm col} and \ref{thm flow}, 
we have the following corollaries 
concerning evaluations of the tunnel number and the cutting number 
of handlebody-knots.

\begin{corollary}\label{tunnel number}
Let $H$ be an $S^1$-oriented handlebody-knot 
and $(D,\rho)$ be a $G$-flowed diagram of $H$.
Let $X$ be a $G$-family of Alexander quandles as a right $F[G]$-module for some field $F$, 
where $X \cong F^d$ as vector spaces over $F$ for some $d \in \z_{\geq 0}$.
Then it follows that 
\[
\frac{\dim_F\col_X(D,\rho)}{d}-1 \leq t(H).
\]
\end{corollary}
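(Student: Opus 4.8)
The plan is to derive Corollary \ref{tunnel number} from Theorem \ref{thm col} together with Lemma \ref{trivial hdbdy-knot flow}. Let $H$ be an $S^1$-oriented genus $g$ handlebody-knot and set $t := t(H)$. By the characterization recorded in Section 2, $t(H) = \min\{ i \mid H < O_{g+i}\}$, so $H < O_{g+t}$, i.e.\ $H$ is a constituent handlebody-knot of the trivial handlebody-knot $O_{g+t}$. Thus we are in the situation of Theorem \ref{thm col} with the roles ``$H'$'' $= H$ (genus $g$) and ``$H$'' $= O_{g+t}$ (genus $g+t$); the difference of genera is exactly $t$.

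First I would apply Theorem \ref{thm col} to the inclusion $H < O_{g+t}$: given the chosen $G$-flow $\rho$ of $H$, the theorem produces a $G$-flow $\tilde\rho \in \flow(O_{g+t};G)$ with $\im\tilde\rho = \im\rho$ and
\[
\dim_F \col_X(D,\rho) - \dim_F \col_X(D_{O_{g+t}},\tilde\rho) \leq d\bigl((g+t) - g\bigr) = dt,
\]
where $D_{O_{g+t}}$ is any diagram of $O_{g+t}$. Next I would evaluate the second term on the left. By Lemma \ref{trivial hdbdy-knot flow}, every $G$-flow of $O_{g+t}$ is a trivial coloring $G$-flow, so every $X$-coloring of $(D_{O_{g+t}},\tilde\rho)$ is trivial; as observed immediately after that lemma, this gives $\#\col_X(D_{O_{g+t}},\tilde\rho) = \#X = \#F^{\,d}$, hence $\dim_F \col_X(D_{O_{g+t}},\tilde\rho) = d$. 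Substituting, $\dim_F \col_X(D,\rho) - d \leq dt$, and dividing by $d$ yields $\dfrac{\dim_F \col_X(D,\rho)}{d} - 1 \leq t = t(H)$, as claimed.

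There is one degenerate case to dispose of: if $d = 0$ then $X$ is the zero module, $\col_X(D,\rho) = 0$, and the asserted inequality reads $-1 \leq t(H)$, which is trivially true; so I would simply assume $d \geq 1$ in the main argument so that dividing by $d$ is legitimate. I expect the only genuine point requiring care to be the bookkeeping in invoking Theorem \ref{thm col} with the trivial handlebody-knot playing the role of the larger-genus knot $H$ — in particular making sure that the diagram of $O_{g+t}$ used there may be taken to be the standard diagram of Lemma \ref{trivial hdbdy-knot flow}, which is legitimate because $\dim_F \col_X$ is a diagram-independent invariant of the pair $(O_{g+t},\tilde\rho)$ by the Proposition of \cite{IIJO13}. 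Everything else is a direct substitution.
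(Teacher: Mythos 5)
Your proposal is correct and follows essentially the same route as the paper: apply Theorem \ref{thm col} to the relation $H < O_{g+t(H)}$ and evaluate $\dim_F\col_X$ of the trivial handlebody-knot as $d$ via Lemma \ref{trivial hdbdy-knot flow}. Your extra remarks on the degenerate case $d=0$ and on diagram-independence are fine but not needed; nothing is missing.
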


\begin{proof}
Let $g$ be the genus of $H$ 
and put $m:=t(H)$, which implies that $H<O_{g+m}$.
Let $(O_g,\rho_0)$ be a $G$-flowed diagram of $O_g$, 
where we note that we use the same symbol $O_g$ as the genus $g$ trivial handlebody-knot.
By Lemma \ref{trivial hdbdy-knot flow}, 
we have $\dim_F \col_X(O_g,\rho_0)=d$.
By Theorem \ref{thm col}, 
we obtain that 
$\dim_F \col_X(D,\rho)-d \leq dm$, 
which completes the proof.
\end{proof}

\begin{corollary}\label{cutting number}
Let $H$ be an $S^1$-oriented genus $g$ handlebody-knot 
and let $G$ be a group.
Then it follows that 
\[
g-\log_{|G|}\#\flowtrivial (H;G) \leq \cut(H).
\]
\end{corollary}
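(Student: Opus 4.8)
The plan is to deduce this directly from Theorem \ref{thm flow} by comparing $H$ with an appropriate trivial handlebody-knot, just as Corollary \ref{tunnel number} is deduced from Theorem \ref{thm col}. First I would dispose of the degenerate case: if $O_j \not< H$ for every $j$, then by the bulleted characterization of $\cut$ given in Section 2 we have $\cut(H)=g$, and since $\log_{|G|}\#\flowtrivial(H;G)\ge 0$ (the set is non-empty, as $\rho_e$ always lies in it, so its cardinality is at least $1$), the inequality $g-\log_{|G|}\#\flowtrivial(H;G)\le g=\cut(H)$ holds trivially. So from now on assume $O_j < H$ for some $j$, and put $c:=\cut(H)=\min\{i\mid O_{g-i}<H\}$, so that $O_{g-c}<H$.

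Next I would apply Theorem \ref{thm flow} with $H'=O_{g-c}$ (which has genus $g'=g-c<g$) and $H$ as given. The theorem yields
\[
\#\flowtrivial(O_{g-c};G)\le \#\flowtrivial(H;G).
\]
Now the key computational input is that, for the trivial handlebody-knot $O_{g-c}$, \emph{every} $G$-flow is a trivial coloring $G$-flow: this is precisely Lemma \ref{trivial hdbdy-knot flow}. Therefore $\flowtrivial(O_{g-c};G)=\flow(O_{g-c};G)$, and from the description of the $G$-flows of $O_{g-c}$ in Figure \ref{trivial_hdbdy-knot_flow} — an arbitrary element $a_i\in G$ on each of the $g-c$ handles, with no constraints — we get $\#\flow(O_{g-c};G)=|G|^{g-c}$. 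Combining,
\[
|G|^{g-c}\le \#\flowtrivial(H;G).
\]
Taking $\log_{|G|}$ of both sides (valid and order-preserving when $|G|>1$) gives $g-c\le \log_{|G|}\#\flowtrivial(H;G)$, i.e. $g-\log_{|G|}\#\flowtrivial(H;G)\le c=\cut(H)$, as desired.

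I expect no serious obstacle here; the corollary is essentially a repackaging of Theorem \ref{thm flow} together with Lemma \ref{trivial hdbdy-knot flow}. The only points that need a little care are the edge cases: the case $|G|=1$ (then $\#\flowtrivial(H;G)=1$ and the left-hand side is just $g$, while $\cut(H)\le g$ always, so the inequality still holds — it may be cleanest to note $\log_1$ is not defined and simply treat $|G|=1$ separately, or to observe the statement is vacuous/trivial there), and the case $O_j\not<H$ for all $j$ handled above. One should also remember to invoke that $\flowtrivial(H;G)$ is always non-empty (via $\rho_e$), so the logarithm on the right is of a quantity $\ge 1$ and hence the bound $g-\log_{|G|}\#\flowtrivial(H;G)\le g$ is automatic whenever $\cut(H)=g$.
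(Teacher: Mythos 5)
Your proof is correct and follows essentially the same route as the paper's: apply Theorem \ref{thm flow} to $H'=O_{g-\cut(H)}$, compute $\#\flowtrivial(O_{g-\cut(H)};G)=|G|^{g-\cut(H)}$ via Lemma \ref{trivial hdbdy-knot flow}, and take $\log_{|G|}$. Your extra attention to the degenerate cases ($O_j\not<H$ for all $j$, and $|G|=1$) is sound but goes slightly beyond what the paper records.
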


\begin{proof}
Put $m:=\cut(H)$ and suppose $m<g$.
Then we have $O_{g-m}<H$.
By Lemma \ref{trivial hdbdy-knot flow}, 
we have $\flowtrivial(O_{g-m};G)=\flow(O_{g-m};G)=|G|^{g-m}$.
Therefore, by Theorem \ref{thm flow}, 
it follows that $|G|^{g-m} \leq \# \flowtrivial (H;G)$, 
which implies that $g-\log_{|G|}\#\flowtrivial (H;G) \leq m$.
When $m=g$, 
we immediately obtain that $g-\log_{|G|}\#\flowtrivial (H;G) \leq m$.
This completes the proof.
\end{proof}

%%%%%%%%%%%%%%%%%%%%%%%%%%%%%%%%%%%%%%%%%%%%%%%%%%%%%%%%%%%%%%%%%%%%%%%%%%%%%%%%%%%%%%%%%%%%
\section{Examples}

In this section, we give some examples.
In Example \ref{ex const hdbdy-knot}, 
we construct a family of handlebody-knots 
which do not contain a certain knot as a constituent handlebody-knot.
In Example \ref{ex tunnel number}, 
we give a family of genus $g$ handlebody-knots with tunnel number $gn$ for any $g \in \z_{>0}$ and $n \in \z_{\geq 0}$.
In Example \ref{ex cutting number}, 
we give a family of genus $g$ handlebody-knots with cutting number $g$ for any $g \in \z_{\geq 2}$.

\begin{example}\label{ex const hdbdy-knot}
Let $K$ and $H_n$ be respectively the knot and the genus 2 handlebody-knot 
represented by the $\z_2$-flowed diagrams $(D,\rho)$ and $(D_n,\rho_n(a,b))$ 
depicted in Figure \ref{ex_const_hdbdy-knot} for any $n \in \z_{\geq 0}$ and $a,b \in \z_2$.
We note that
$K$ is the knot $8_{18}$ in Rolfsen's knot table \cite{Rolfsen76}, 
and $H_1$ is the genus 2 handlebody-knot $5_4$ in the table given in \cite{IKMS12}.
Let $X$ be the Alexander quandle $\z_3[t^{\pm 1}]/(t+1)$, 
which is isomorphic to the field $\z_3$.
Since $\type X=2$, 
$X$ is the $\z_2$-family of Alexander quandles.
Then for any $z_1,z_2,z_3 \in X$, 
the assignment of them to each arc of $(D,\rho)$ as shown in Figure \ref{ex_const_hdbdy-knot} is an $X$-coloring of $(D,\rho)$, 
which implies that $\dim_{\z_3} \col_X(D,\rho) \geq 3$.
On the other hand, 
we can easily see that 
for any $n \in \z_{\geq 0}$ and $a,b \in \z_2$, 
all $X$-colorings of $(D_n,\rho_n(a,b))$ are trivial, 
which implies that 
$\dim_{\z_3} \col_X(D_n,\rho_n(a,b))=1$.
Hence we have $\dim_{\z_3} \col_X(D,\rho)-\dim_{\z_3} \col_X(D_n,\rho_n(a,b)) \geq 2$
for any $n \in \z_{\geq 0}$ and $a,b \in \z_2$.
Therefore 
$K$ is not a constituent handlebody-knot of $H_n$ for any $n \in \z_{\geq 0}$ 
by Theorem \ref{thm col}.
\end{example}

\begin{figure}[h]
\begin{center}
\includegraphics[width=110mm]{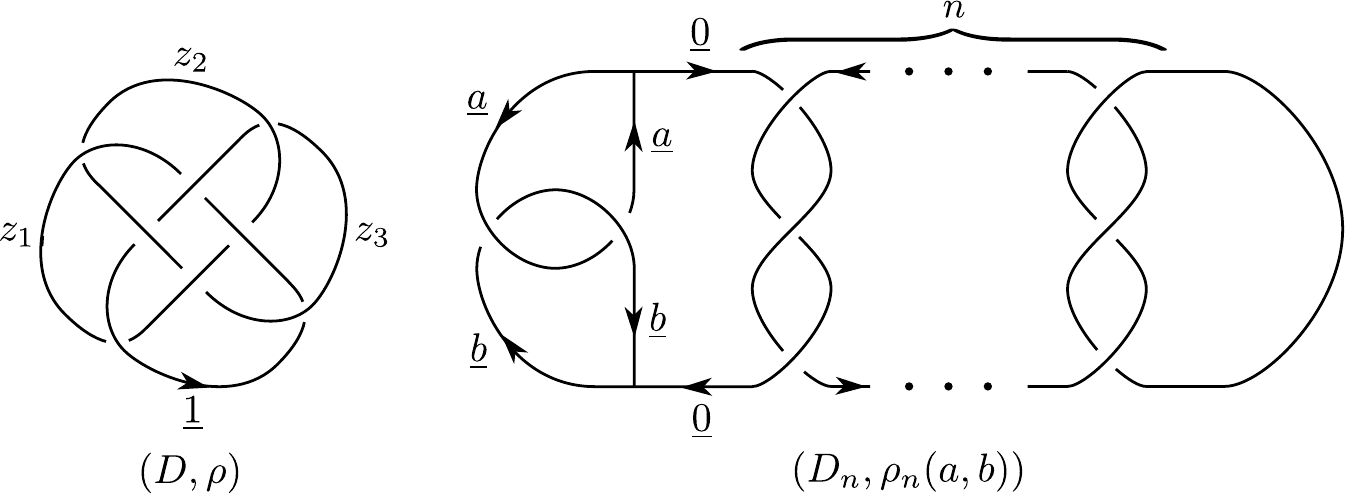}
\end{center}
\caption{$\z_2$-flowed diagrams $(D,\rho)$ and $(D_n,\rho_n(a,b))$ of $K$ and $H_n$ respectively.}\label{ex_const_hdbdy-knot}
\end{figure}

\begin{example}\label{ex tunnel number}
Let $H_{g,n}$ be the $S^1$-oriented genus $g$ handlebody-knot 
represented by the $\z_3$-flowed diagram $(D_{g,n},\rho_{g,n})$ depicted in Figure \ref{ex_tunnel_number} 
for any $g \in \z_{> 0}$ and $n \in \z_{\geq 0}$.
Let $X$ be the Alexander quandle $\z_2[t^{\pm 1}]/(t^2+t+1)$, 
which is an extension field of $\z_2$ 
and isomorphic to $(\z_2)^2$ as vector spaces over $\z_2$.
Since $\type X=3$, 
$X$ is the $\z_3$-family of Alexander quandles.
Then for any $z_0,z_{i,j} \in X$ $(1\leq i \leq g, 1 \leq j \leq n)$, 
the assignment of them to each arc of $(D_{g,n},\rho_{g,n})$ as shown in Figure \ref{ex_tunnel_number} is an $X$-coloring of $(D_{g,n},\rho_{g,n})$, 
which implies that 
\[
\dim_{\z_2} \col_X(D_{g,n},\rho_{g,n})=2 \dim_X \col_X(D_{g,n},\rho_{g,n}) \geq 2(gn+1).
\]
Hence it follows that $gn \leq t(H_{g,n})$ by Corollary \ref{tunnel number}.
On the other hand, 
the set of $gn$ arcs drawn by a dotted line in Figure \ref{ex_tunnel_number} is an unknotting tunnel system for $H_{g,n}$.
Therefore we obtain that $t(H_{g,n})=gn$ for any $g \in \z_{> 0}$ and $n \in \z_{\geq 0}$.
\end{example}

\begin{figure}[h]
\begin{center}
\includegraphics[width=125mm]{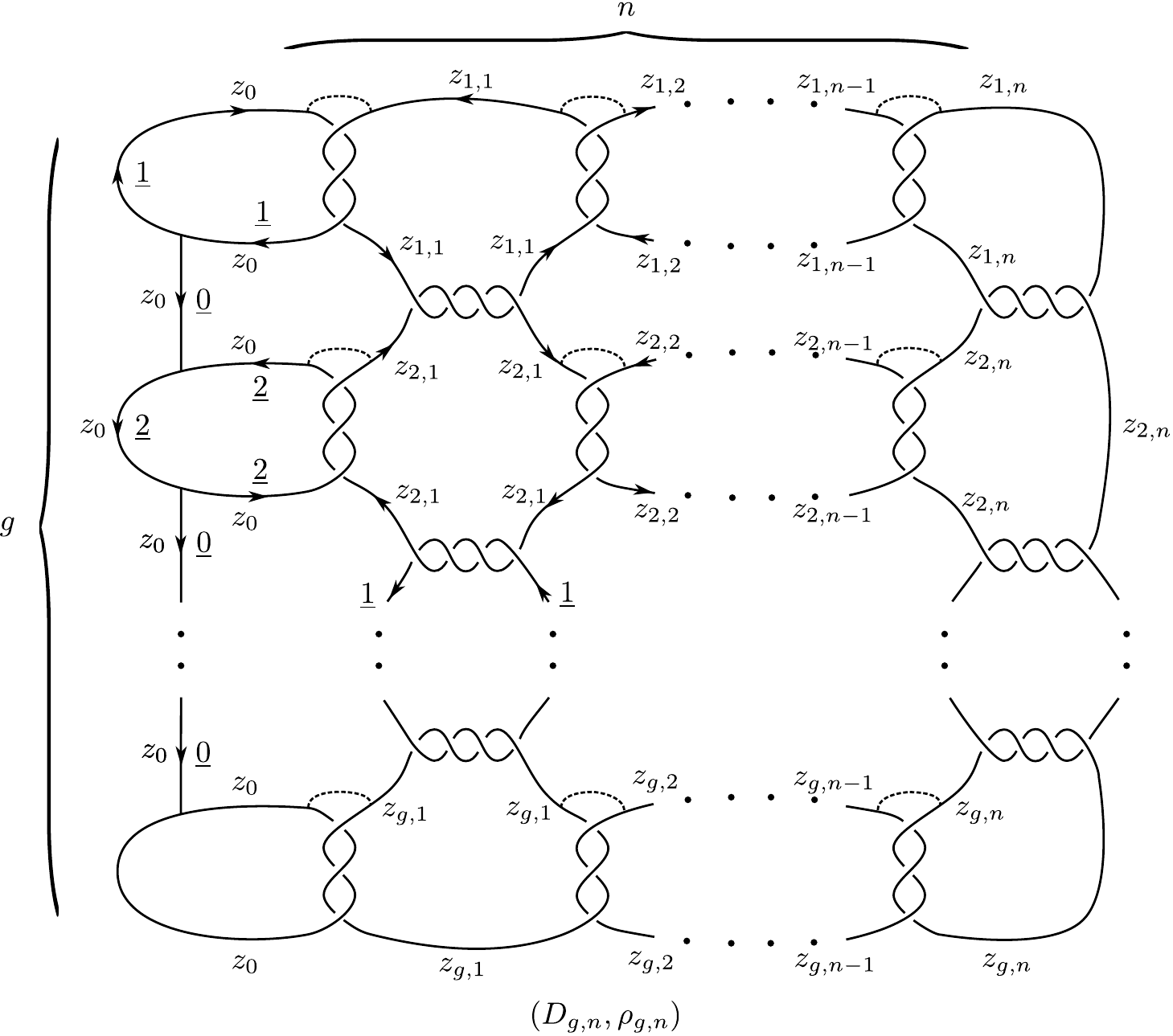}
\end{center}
\caption{A $\z_3$-flowed diagram $(D_{g,n},\rho_{g,n})$ of $H_{g,n}$.}\label{ex_tunnel_number}
\end{figure}

\begin{example}\label{ex cutting number}
For any $g \geq 2$ and $l_1,\ldots, l_g \in 2\z$, 
let $H_{l_1,\ldots,l_g}$ be the genus $g$ handlebody-knot 
represented by the spatial graph $\Gamma_{l_1,\ldots,l_g}$, which is a graph embedded in $S^3$, 
with a $g$-valent vertex $v_g$ depicted in Figure \ref{ex_spatial_graph}, 
which means that $H_{l_1,\ldots,l_g}$ is a regular neighborhood of $\Gamma_{l_1,\ldots,l_g}$.
$H_{l_1,\ldots,l_g}$ has the $\z_2$-flowed diagram $(D_{l_1,\ldots,l_g},\rho(a_1,\ldots,a_g))$ depicted in Figure \ref{ex_cutting_number} 
for any $a_i \in \z_2$.
We note that $\flow(H_{l_1,\ldots,l_g};\z_2)=\{ \rho(a_1,\ldots,a_g) \mid a_i \in \z_2 \}$.
Let $X$ be the Alexander quandle $\z_3[t^{\pm 1}]/(t+1)$, 
which is isomorphic to the field $\z_3$.
Since $\type X=2$, 
$X$ is the $\z_2$-family of Alexander quandles.
Suppose that $(a_1,\ldots,a_g) \neq (0,\ldots,0)$.
Since $\Gamma_{l_1,\ldots,l_g}$ has a $g$-fold rotational symmetry to $v_g$, 
we may assume that $a_1=1$.
First,
if $l_1=4l$ for some $l \in \z$, 
we have the non-trivial $X$-coloring of $(D_{l_1,\ldots,l_g},\rho(a_1,\ldots,a_g))$ 
depicted in the top of Figure \ref{ex_cutting_number_col}.
Next, 
if $l_1=4l+2$ for some $l \in \z$ and $a_2=0$, 
we have the non-trivial $X$-coloring of $(D_{l_1,\ldots,l_g},\rho(a_1,\ldots,a_g))$ 
depicted in the middle of Figure \ref{ex_cutting_number_col}.
Finally,
if $l_1=4l+2$ for some $l \in \z$ and $a_2=1$, 
we have the non-trivial $X$-coloring of $(D_{l_1,\ldots,l_g},\rho(a_1,\ldots,a_g))$ 
depicted in the bottom of Figure \ref{ex_cutting_number_col}.
Hence we have $\flowtrivial(H_{l_1,\ldots,l_g};\z_2)=\{ \rho(0,\ldots,0) \}$, 
that is, $\# \flowtrivial(H_{l_1,\ldots,l_g};\z_2)=1$.
Therefore, 
we obtain that $g \leq \cut(H_{l_1,\ldots,l_g})$, 
which implies that $\cut(H_{l_1,\ldots,l_g})=g$ 
for any $g \geq 2$ and $l_1,\ldots, l_g \in 2\z$ 
by Corollary \ref{cutting number}.
\end{example}

\begin{figure}[h]
\begin{center}
\includegraphics[width=110mm]{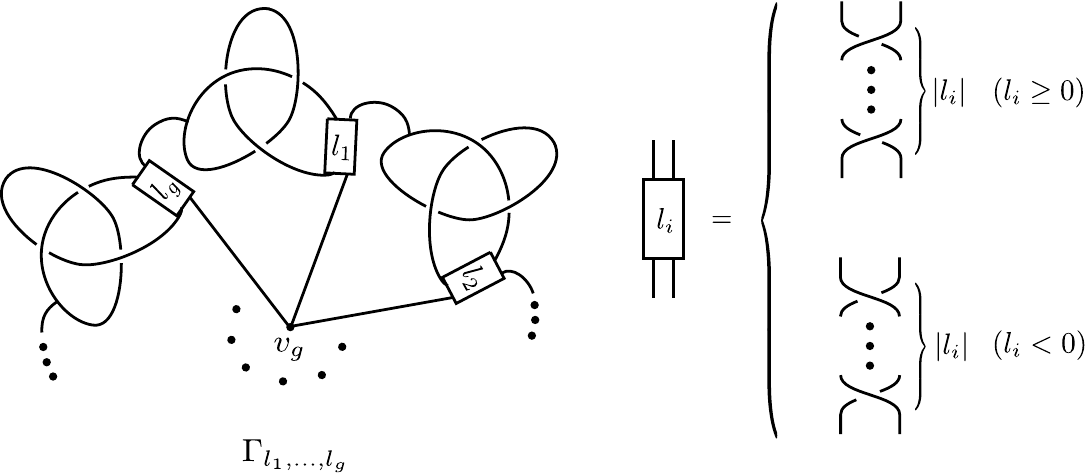}
\end{center}
\caption{A spatial graph $\Gamma_{l_1,\ldots,l_g}$.}\label{ex_spatial_graph}
\end{figure}

\begin{figure}[h]
\begin{center}
\includegraphics[width=120mm]{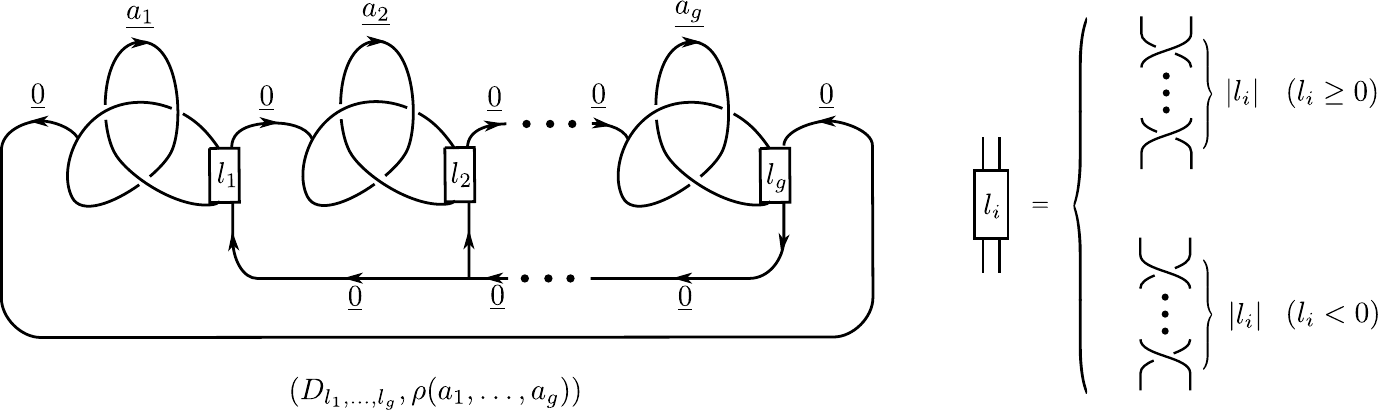}
\end{center}
\caption{A $\z_2$-flowed diagram $(D_{l_1,\ldots,l_g},\rho(a_1,\ldots,a_g))$ of $H_{l_1,\ldots,l_g}$.}\label{ex_cutting_number}
\end{figure}

\begin{figure}[h]
\begin{center}
\includegraphics[width=120mm]{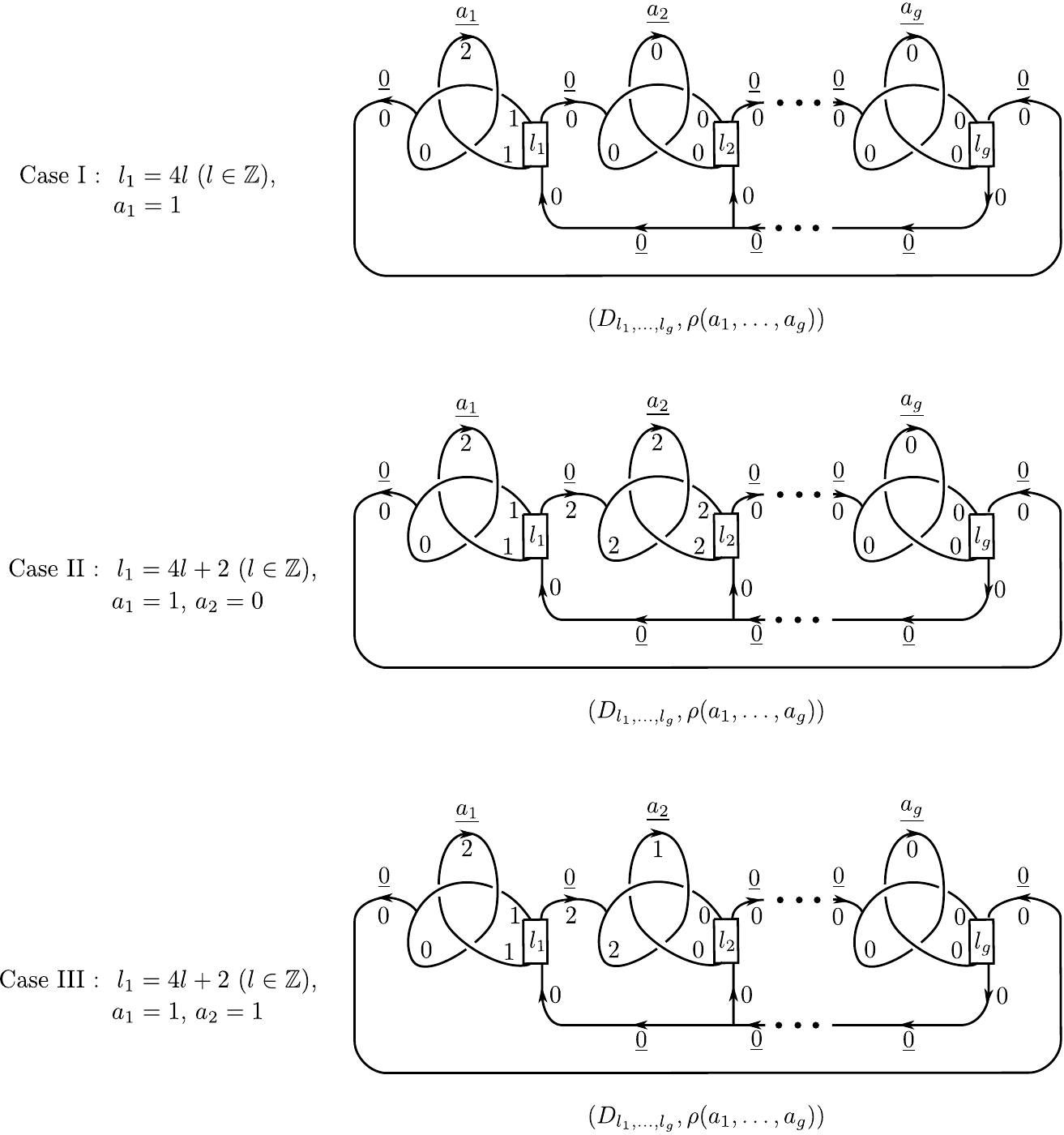}
\end{center}
\caption{Non-trivial $X$-colorings of $(D_{l_1,\ldots,l_g},\rho(a_1,\ldots,a_g))$.}\label{ex_cutting_number_col}
\end{figure}

%%%%%%%%%%%%%%%%%%%%%%%%%%%%%%%%%%%%%%%%%%%%%%%%%%%%%%%%%%%%%%%%%%%%%%%%%%%%
\section*{Acknowledgment}
The author would like to express his best gratitude to Atsushi Ishii and Shin'ya Okazaki 
for their helpful advice and valuable discussions.

%%%%%%%%%%%%%%%%%%%%%%%%%%%%%%%%%%%%%%%%%%%%%%%%%%%%%%%%%%%%%%%%%%%%%%%%%%%

\end{document}